\newcommand{\xx}{\boldsymbol{x}}
\newcommand{\yy}{\boldsymbol{y}}
\newcommand{\vv}{\boldsymbol{v}}
\newcommand{\ww}{\boldsymbol{w}}
\newcommand{\aalpha}{\boldsymbol{\alpha}}
\newcommand{\RR}{\mathbb{R}}
\newcommand{\mc}{\mathcal}
\newcommand{\VV}{\mathcal{V}}
\newcommand{\rank}{\textup{rank}\,}
\newcommand{\Gr}{\textup{Gr}}
\newcommand{\conv}{\textup{conv}}
\newcommand{\bc}{\color{blue}}
\newtheorem{theorem}{Theorem}
\newtheorem{proposition}[theorem]{Proposition}
\theoremstyle{definition}
\newtheorem{example}[theorem]{Example}
\theoremstyle{remark}
\newtheorem{remark}[theorem]{Remark}
\title{General non-realizability certificates for spheres with linear programming}
\date{}
\author[J. Gouveia]{Jo{\~a}o Gouveia}
\address{CMUC, Department of Mathematics,
  University of Coimbra, 3001-454 Coimbra, Portugal}
\email{jgouveia@mat.uc.pt}
\author[A. Macchia]{Antonio Macchia}
\address{Fachbereich Mathematik und Informatik, Freie Universit\"at Berlin, Arnimallee 2, 14195 Berlin, Germany}
\email{macchia.antonello@gmail.com}
\author[A. Wiebe]{Amy Wiebe}
\address{{Fachbereich Mathematik und Informatik, Freie Universit\"at Berlin, Arnimallee 2, 14195 Berlin, Germany \hspace{420pt} \textcolor{white}{de}
$\phantom{De\,}$Department of Mathematics,
Simon Fraser University,
8888 University Drive,
Burnaby, British Columbia,  V5A 1S6,   Canada}}
\email{w.amy.math@gmail.com}
\thanks{ Gouveia was partially supported by the Centre for Mathematics of the University of Coimbra
- UIDB/00324/2020, funded by the Portuguese Government through FCT/MCTES. Macchia was supported by the Einstein Foundation Berlin under Francisco Santos grant EVF-2015-230 and by the Deutsche Forschungsgemeinschaft (DFG, German
Research Foundation) – project number 454595616. Wiebe was supported by Natural Sciences and Engineering Research Council of Canada (NSERC) [PDF - 557980 - 2021], and by the Pacific Institute for the Mathematical Sciences (PIMS). The research and findings may not reflect those of the Institute.}
\begin{document}
\maketitle

\begin{abstract}
In this paper we present a simple technique to derive certificates of non-realizability for an abstract polytopal sphere. Our approach uses a variant of the classical algebraic certificates introduced by Bokowski and Sturmfels in \cite{CSG}, the final polynomials. More specifically we reduce the problem of finding a realization to that of finding a positive point in a variety and try to find a polynomial with positive coefficients in the generating ideal (a positive polynomial), showing that such point does not exist. Many, if not most, of the techniques for proving non-realizability developed in the last three decades can be seen as following this framework, using more or less elaborate ways of constructing such positive polynomials. Our proposal is more straightforward as we simply use linear programming to exhaustively search for such positive polynomials in the ideal restricted to some linear subspace. Somewhat surprisingly, this elementary strategy yields results that are competitive with more elaborate alternatives, and allows us to derive new examples of non-realizable abstract polytopal spheres.
\end{abstract}

\medskip
\noindent {{\bf Keywords:} non-realizability certificates, final polynomials, slack matrices, linear programming.}

\section{Introduction}

One of the oldest questions in modern polytope theory is whether a given abstract polytopal sphere is realizable as the boundary of a convex polytope. The question was first answered by Steinitz for 3-dimensional polytopes in a theorem which classifies all realizable 3-polytopes in graph-theoretic terms \cite{Steinitz}. To date there is no higher-dimensional analog of Steinitz’s theorem, and attempts to answer the question frequently rely on the theory of oriented matroids, exhaustive computation and classification of spheres with a fixed dimension and number of vertices, and algebraic certificates of non-realizability based on Grassmann-Pl\"ucker relations \cite{BJS90, F20, pfeifle2020positive}.

Due to the large number of Grassmann-Pl\"ucker relations, the search for algebraic certificates based on them---the so-called {\em final polynomials}---often requires some assumption on the structure of such polynomials in order for the search to be feasible.
In this paper, we present a new method to search for algebraic certificates of non-realizability with no assumed structure. Our algorithm uses linear programming together with the more compact description of  the realization space given by the reduced slack ideal model \cite{GMWthirdpaper} to find algebraic certificates of non-realizability. In the end we produce certificates for a collection of large simplicial and quasi-simplicial spheres, including some for which realizability was not previously known.

The rest of the paper is organized as follows. In Section~\ref{sec:alg_certs}, we describe the general techniques used for producing algebraic certificates of non-realizability. These techniques are all based on finding {\em positive polynomials} in an ideal, which we describe how to find for a general ideal in Section~\ref{sec:pos_polys}. Section~\ref{sec:slack_setting} specializes these techniques to the setting of realizability of spheres. This section includes a brief introduction to the reduced slack model of the realization space \cite{GMWthirdpaper}.
Finally in Section~\ref{sec:results}, we discuss the implementation of our algorithm, describe the relation of our certificates to classical final polynomials, and list the results of our computations regarding the realizability of a database of selected spheres. In particular, we derive non-realizability certificates for a large number of new instances of prismatoids, a class of polytopes introduced in \cite{CS19}, recover the recent result of \cite{pfeifle2020positive} on the non-realizability of Jockusch's family of simplicial $3$-spheres (see \cite{NZ20}), as well as providing a few other examples of new or simpler non-realizability certificates.

\section{Algebraic certificates for sphere non-realizability}\label{sec:alg_certs}

In this section we will introduce and contextualize some of the algebraic approaches that have been used to certify that a given sphere is non-realizable.
In order to do that we start by recalling two essentially equivalent models for the realization space of an abstract polytopal sphere: the slack model and the Grassmannian model. In what follows we present only a brief overview of the facts that we will need; a thorough presentation of these models can be found in \cite{GMWthirdpaper}.

Given an abstract $d$-dimensional sphere $P$ with vertex set $\{1,\dots,n\}$ and facet set\break $\mathcal{F}=\{F_1,\dots,F_m\}$, recall that the \emph{symbolic slack matrix} of $P$ is the $n \times m$ matrix $S_P(\xx)$ whose $(i,j)$-entry is zero if $i \in F_j$ and an indeterminate variable otherwise. The \emph{slack variety} of $P$, $\VV_P$, is then the Zariski closure of the set
\[
\{ S_P(\xi) : \rank(S_P(\xi)) \leq d+1 \textrm{ and } \xi \in \RR_*^N\}.
\]
{Throughout the paper we will denote by $\RR_*,\RR_+$, and $\RR_{++}$  the non-zero, non-negative, and positive real numbers respectively. The variety $\VV_P$ is cut} out by the \emph{slack ideal} of $P$, $I_P$, which is the ideal generated by the $(d+2)$-minors of $S_P(\xx)$ saturated by the product of all variables. The slack variety gives us a natural model for the realization space of a polytope up to projective equivalence \cite{GMTWfirstpaper}.

\begin{proposition}[{\cite[Corollary 3.4]{GMTWfirstpaper}}]
There is a one to one correspondence between realizations of a polytope $P$ up to projective equivalence and the elements of $\VV_P \cap \RR^N_{++}$ up to column and row scalings by positive scalars. In particular, $P$ is not realizable if and only if $\VV_P \cap \RR^N_{++} = \emptyset$.
\end{proposition}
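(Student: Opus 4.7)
The plan is to establish the bijection by constructing maps in both directions and then checking they respect the equivalence relations on each side.

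For the forward direction, I would start with a realization of $P$, i.e., an embedding with vertices $v_1,\dots,v_n \in \RR^d$ and facet inequalities $\langle a_j, x\rangle \le b_j$ realizing the combinatorial facets $F_1,\dots,F_m$. The classical slack matrix $S$ with $(i,j)$-entry $b_j - \langle a_j, v_i\rangle$ is nonnegative, it is zero exactly on incidences $i \in F_j$ (matching the symbolic zero pattern of $S_P(\xx)$), its nonzero entries are strictly positive, and it has rank $d+1$ since it factors through homogenized vertex and inequality data in $\RR^{d+1}$. Thus the vector of nonzero entries lies in $\VV_P \cap \RR_{++}^N$.

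For the reverse direction, given $\xi \in \VV_P \cap \RR_{++}^N$, the matrix $S_P(\xi)$ has rank at most $d+1$ and the prescribed support. Factor $S_P(\xi) = V W^\top$ with $V \in \RR^{n\times (d+1)}$ and $W \in \RR^{m\times(d+1)}$. After a change of basis I would normalize so that one column of $V$ is the all-ones vector; the remaining columns of the rows of $V$ then read off candidate vertex coordinates $v_i \in \RR^d$, while the rows of $W$ encode candidate facet inequalities. The key verification is that this data actually describes a convex polytope with the prescribed combinatorial type: the zero pattern of $S_P(\xi)$ forces each $v_i$ to lie on the prescribed facet hyperplanes, the strict positivity of the other entries forces $v_i$ to lie on the correct (interior) side of every other facet hyperplane, and since $P$ is a polytopal sphere this incidence data determines the polytope up to projective equivalence.

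It remains to check that the two constructions are mutually inverse modulo the stated equivalences. Rescaling the $i$-th row of $S_P(\xi)$ by $\lambda_i > 0$ corresponds to rescaling $v_i$'s homogenization, while rescaling the $j$-th column by $\mu_j > 0$ corresponds to rescaling the $j$-th facet inequality; together with the $\mathrm{GL}_{d+1}$ ambiguity of the factorization $S_P(\xi) = VW^\top$, these are exactly the ambiguities that collapse under projective equivalence. The ``in particular'' statement then follows immediately since the set of realizations is nonempty precisely when $\VV_P \cap \RR_{++}^N$ is nonempty.

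The main obstacle, and the step I would expect to spend the most care on, is the geometric reconstruction: showing that a nonnegative rank-$(d+1)$ matrix with the prescribed support pattern really does encode a convex polytope with combinatorial type $P$. The support condition alone guarantees the right incidences, but ruling out degeneracies (repeated vertices, extra incidences forcing the wrong face lattice, or a non-convex arrangement) uses both the strict positivity of the off-support entries and the fact that $P$ is an abstract polytopal sphere, which rigidifies the lattice. Once this is handled, the equivalence-of-equivalences bookkeeping is routine.
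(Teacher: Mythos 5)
The proposition is imported from \cite{GMTWfirstpaper}; the present paper does not reprove it, so there is no in-text argument to compare against, and I assess your proposal on its own merits. Your forward direction is fine. In the reverse direction there are two gaps. The minor one: to make $\mathbf{1}$ a column of $V$ after factoring $S_P(\xi)=VW^\top$, you need $\mathbf{1}$ in the column span of $S_P(\xi)$, which a generic positive point of $\VV_P$ will not satisfy. You must first row-scale --- say by reciprocals of the row sums, which are strictly positive because every vertex lies off some facet --- so that $\mathbf{1}$ enters the column span, and only then factor and normalize; this is compatible with the scaling equivalence, but your write-up invokes the normalization before earning it.

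The essential gap is the one you flag and then leave open. Reproducing the incidence pattern with strict positivity elsewhere does not by itself make $\conv(v_1,\dots,v_n)$ a realization of $P$: a priori some $v_i$ might not be a vertex of the hull, the hull might have facets not among the $H_j$, or $H_j\cap\conv(v_1,\dots,v_n)$ might have dimension below $d-1$. The sentence ``since $P$ is a polytopal sphere this incidence data determines the polytope up to projective equivalence'' is the nontrivial statement to be proved, not a citable fact, and ``rigidifies the lattice'' is a placeholder rather than an argument. A real proof has to exploit the rank condition: a flag of facets yields a $(d{+}1)\times(d{+}1)$ submatrix of $S_P(\xi)$ that is triangular with strictly positive diagonal, which simultaneously forces $\rank S_P(\xi)=d+1$, shows the flag vertices map to affinely independent $v_i$, and --- restricted to the facets through a fixed vertex $i$ --- shows that $d$ linearly independent constraints are tight at $v_i$, so $v_i$ genuinely is a vertex and each face $H_j\cap\conv(v_1,\dots,v_n)$ has dimension exactly $d-1$. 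One then still needs a comparison between the abstract sphere $\partial P$ and the boundary complex of the hull, for instance showing that the resulting injective poset map between two sphere lattices of the same dimension must be an isomorphism. Until those two pieces are supplied, the reverse direction of your argument is a plan, not a proof.
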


This correspondence is explicitly given by the \emph{slack matrices} of each realization. Recall that a (realized) polytope $P$ with vertices $\vv_1,\ldots, \vv_n\in\RR^d$ and facets defined by inequalities $\aalpha_j^\top\xx\leq b_j$, with $\aalpha_j\in\RR^d$ and $b_j \in \RR$, has a slack matrix $S_P$ with entries of the form $(S_P)_{i,j} = b_j - \aalpha_j^\top\vv_i,$ which by construction lies in the slack variety and is well defined up to column scalings.

\medskip
A related  model that has been more classically used to derive non-realizability certificates is the \textit{Grassmannian model}. Let $\Gr(d+1,n)$ be the Grassmannian variety of $(d+1)$-dimensional spaces in $\RR^n$, which we will think of as coordinatized by Pl\"ucker coordinates $\{p_J \, :\, J\subseteq \{1,\dots,n\} \textrm{ and }\break |J|=d+1\}$, and let $I^\Gr(d+1,n)$ be the ideal that cuts out the Grassmannian in those coordinates. We define two special sets of such coordinates. The first, $\Gamma_0$, will be the collection of subsets $J$ such that there is a facet $F$ of $P$ with $J\subset F$. To define the second, $\Gamma_1$, we need to have a way of identifying affine bases of facets of a polytope, i.e., sets of vertices in a facet that are affinely independent in any realization of $P$. We will call such sets {\em facet bases}. Note that in practice, since we do not know anything about the realizations of $P$ a priori, we need a combinatorial way to identify such bases. One way to do this is by choosing a flag in the face lattice. By definition, a \textit{flag} is a maximal chain in the face lattice and hence has length $d+1$ for a $d$-polytope:
\[
\emptyset = G_{-1} \subsetneq G_0 \subsetneq \cdots \subsetneq G_{d-1} \subsetneq G_d = P,
\]
where $G_i$ is an $i$-dimensional face of $P$.
Then a set of vertices chosen so that $\vv_i \in G_i \backslash G_{i-1}$ for $i=0,\ldots, d$ must be affinely independent. (If any $\vv_i$ was in the affine hull of $\vv_0,\ldots, \vv_{i-1}$, then we would have $\vv_i\in G_{i-1}$ by the definition of a face of $P$.)
Thus the vertices indexed by $\{i_0,\ldots, i_{d-1}\}$ chosen as above form a facet basis for $G_{d-1}$. We call the full set of vertices, indexed by $\{i_0,\ldots,i_d\}$, a {\em flag of vertices}. Notice that for any facet $F$ of $P$, there can be many flags with $G_{d-1}=F$, each resulting in a flag of vertices which is the union of a facet basis for $F$ with a vertex not in $F$. To highlight this connection to affine bases of facets, we will also refer to these flags of vertices as {\em facet extensions}, and  $\Gamma_1$  will be the collection of subsets $J$ that are {facet extensions} of $P$. By duality, one can similarly define a \emph{flag of facets} that will be of use later on.

Every coordinate indexed by $J$ in $\Gamma_1$ comes with a sign $\chi_J \in \{\pm 1\}$ depending on the orientation of $J$ as will be discussed later. We then define the \emph{Grassmannian variety of $P$} as the Zariski closure of the set
\[
\Gr(P)=\Pi_{\Gamma_1} \left(
\{ \xi \in \Gr(d+1,n) : \xi_J = 0 \textrm{ for all } J \in \Gamma_0 \textrm{ and } \xi_K \not = 0 \textrm{ for all } K \in \Gamma_1 \} \right),
\]
where $\Pi_{\Gamma_1}$ is the signed projection onto the coordinates in $\Gamma_1$, i.e.,
$\Pi_{\Gamma_1}(\xi)=y \in \RR^{\Gamma_1}$ such that $y_J=\chi_J \xi_J$. Again, the Grassmanian variety gives us a natural model for the realization space of a polytope. In particular, we have a simple realizability characterization.

\begin{proposition}\label{prop:Grassmannian}
$P$ is not realizable if and only if $\Gr(P) \cap \RR^N_{++} = \emptyset$.
\end{proposition}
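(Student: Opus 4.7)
The plan is to establish the bijection between realizations of $P$ (up to projective equivalence) and points of $\Gr(P)\cap \RR_{++}^N$, from which the characterization follows. Given the preceding Proposition on the slack model, one route is to build an explicit equivalence between the slack variety restricted to the positive orthant and $\Gr(P)\cap \RR_{++}^N$; the more self-contained route, which I would prefer for clarity, is to construct the correspondence directly and check that the sign data $\chi_J$ is calibrated so that positivity exactly encodes convexity.

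For the forward direction, assume $P$ is realized with vertices $\vv_1,\dots,\vv_n\in\RR^d$. Form the $(d+1)\times n$ matrix $V=[\tilde\vv_1\,|\,\cdots\,|\,\tilde\vv_n]$ with $\tilde\vv_i=(\vv_i,1)^\top$, and let $\xi\in\Gr(d+1,n)$ be its Pl\"ucker vector, $\xi_J=\det(V_J)$. If $J\in\Gamma_0$, then $J$ sits inside some facet $F$, so the columns of $V_J$ lie on an affine hyperplane, forcing affine dependence and $\xi_J=0$. If $J=\{i_0,\dots,i_{d-1},i_d\}\in\Gamma_1$ is a facet extension of a facet $F$ with facet basis $\{i_0,\dots,i_{d-1}\}\subset F$, then the first $d$ columns of $V_J$ are linearly independent and $\vv_{i_d}\notin\mathrm{aff}(F)$, so $\xi_J\neq 0$; moreover the \emph{sign} of $\xi_J$ records on which side of $\mathrm{aff}(F)$ the vertex $\vv_{i_d}$ lies. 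The signs $\chi_J$ are chosen (this is the combinatorial data built into $\Gr(P)$) so that $\chi_J\xi_J>0$ precisely when $\vv_{i_d}$ lies on the interior side of $F$, which is the case for every facet of a convex polytope. Hence $\Pi_{\Gamma_1}(\xi)\in \RR_{++}^N$.

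For the reverse direction, assume $y\in\Gr(P)\cap\RR_{++}^N$, lifted to some $\xi\in\Gr(d+1,n)$ with $\xi_J=0$ for $J\in\Gamma_0$ and $\chi_J\xi_J>0$ for $J\in\Gamma_1$. Represent the corresponding $(d+1)$-dimensional subspace as the row span of a $(d+1)\times n$ matrix $V$; the non-vanishing of $\xi_K$ for some facet extension $K$ allows us to rescale rows so that the last row of $V$ is the all-ones vector, producing columns $\tilde\vv_i=(\vv_i,1)^\top$ and hence a point configuration $\vv_1,\dots,\vv_n\in\RR^d$. The vanishing conditions on $\Gamma_0$ guarantee that for each facet $F$ the vertices indexed by $F$ lie on a common affine hyperplane $H_F$, and the non-vanishing together with the sign conditions on $\Gamma_1$ guarantee that for every facet $F$, all other vertices lie strictly on one fixed side of $H_F$. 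These two properties force $\conv(\vv_1,\dots,\vv_n)$ to be a convex polytope whose face lattice matches that of $P$, giving a realization.

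The main obstacle, and the step that needs the most care, is the calibration of the signs $\chi_J$ and the verification that the combinatorial conditions ($\Gamma_0$-vanishing plus signed $\Gamma_1$-positivity) are genuinely sufficient to pin down the entire face lattice of $P$, rather than merely the facet hyperplane arrangement. Concretely, one must argue that sign-coherence on all facet extensions propagates from each chosen facet basis to every vertex outside the facet, using that any two facet bases of the same facet differ by a sequence of elementary exchanges whose associated Pl\"ucker identities relate their signs predictably. Once this sign bookkeeping is settled, both directions go through cleanly and the equivalence (and hence the proposition) follows.
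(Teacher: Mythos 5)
Your proof proposal takes a genuinely different route from the paper. The paper does not argue this proposition from scratch: it invokes the equivalence between $\Gr(P)$ and $\VV_P$ (citing Theorem 4.7 of \cite{GMWthirdpaper}), sketches the explicit, positivity-preserving maps between the two, and then lets the realizability characterization fall out of the preceding slack-model proposition. You instead construct the correspondence between realizations and positive points of $\Gr(P)$ directly. That is a legitimate alternative, and your forward direction is essentially correct: the Pl\"ucker vector of the homogenized vertex matrix kills exactly the $\Gamma_0$ coordinates, and the $\Gamma_1$ coordinates are the slack determinants, whose uniform sign is what the orientation data $\chi_J$ is designed to normalize.

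The reverse direction, however, has a concrete gap. You claim that ``the non-vanishing of $\xi_K$ for some facet extension $K$ allows us to rescale rows so that the last row of $V$ is the all-ones vector.'' That inference is false as stated: $\xi_K\neq 0$ only says that the $d+1$ columns of $V$ indexed by $K$ are linearly independent; it says nothing about the all-ones vector lying in the row span of $V$, which is what you need in order to dehomogenize the subspace into a point configuration in $\RR^d$. This is precisely the acyclicity issue for vector configurations. It does hold here, but it requires an argument: for each facet $F_j$ with chosen basis $B_j$, let $\ell_j$ be the linear functional on $\RR^{d+1}$ (normalized by $\chi$) annihilating the columns in $B_j$; the vanishing on $\Gamma_0$ forces $\ell_j$ to vanish on all columns indexed by $F_j$, the sign conditions force $\ell_j(V_i)>0$ for $i\notin F_j$, and since every vertex lies outside at least one facet, $\sum_j \ell_j$ is strictly positive on every column. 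Only then can you column-scale and change row basis to get the all-ones last row, and hence an affine point configuration. You should either supply this argument or simply route through the slack model as the paper does, where this step is subsumed in the cited equivalence.

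Two smaller points worth flagging. First, you implicitly assume that a positive point $y\in\Gr(P)$ lifts to a point of $\Gr(d+1,n)$ satisfying the defining zero/nonzero conditions; since $\Gr(P)$ is defined as a Zariski closure of a projection, this lift is not automatic and should be justified (or the argument should be restated in terms of the pre-closure set). Second, you correctly identify that it remains to check the resulting polytope has face lattice isomorphic to that of $P$, not merely the right facet hyperplanes; as you say this is the delicate part, but it deserves more than a gesture toward ``elementary exchanges'' --- it is essentially the statement that a sphere whose chirotope is realized by a point configuration with the prescribed facet incidences is the boundary complex of $\conv(\vv_1,\dots,\vv_n)$. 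In short: your outline is plausible and the forward direction is fine, but the reverse direction as written leaps over the acyclicity step with an incorrect justification, and leaves the two subtler verifications as acknowledged but unfilled gaps.
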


Note that $\Gr(P)$ and $\VV_P$ are essentially equivalent \cite[Theorem 4.7]{GMWthirdpaper}. More explicitly, for any facet $F_j$ of $P$ pick $B_j$ to be a flag of vertices such that $F_j$ is part of the underlying flag. Then for any $\xi \in \Gr(P)$ the matrix obtained by filling each entry $(i,j)$ of $S_P(x)$ with $\xi_{\{i\} \cup B_j}$ is in $\VV_P$. On the other hand, given any point $\xi$ in the slack variety, the image by $\Pi_{\Gamma_1}$ of the column space of $S(\xi)$ gives us a point in $\Gr(P)$. Moreover, these maps preserve positivity of the coordinates, so it is clear that the realizability questions are totally equivalent, and just offer two possible viewpoints.

In any case, the question of realizability of polytopes boils down to a fundamental question in real algebra: how can we certify that a given variety has no positive points? This is a special case of the more general question of checking emptiness of semialgebraic sets, for which there are several Positivstellensatz type theorems that offer answers. A direct application of the version in \cite{becker1986real} yields the following theorem.

\begin{theorem} \label{thm:certificates}
Given a real variety $\mathcal{V}(I) \subseteq \RR^n$, it has no positive points if and only if
there is an element of $I$ of the form
\[
\xx^{\alpha} + \sum_{i \in I} \xx^{\beta_i} \sigma_i(\xx),
\]
where $\sigma_i(\xx)$ are sums of squares of polynomials.
\end{theorem}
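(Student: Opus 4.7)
My plan is to prove the biconditional by handling its two directions separately. The reverse implication will follow immediately by evaluation at an alleged positive point, and the forward direction will be a direct application of the strict-positivity Positivstellensatz of \cite{becker1986real} to a suitably chosen semi-algebraic system, as the text itself already announces.

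For the ``if'' direction, suppose some $f \in I$ has the form
\[
f(\xx) = \xx^{\alpha} + \sum_{i} \xx^{\beta_i} \sigma_i(\xx),
\]
and assume toward a contradiction that there is a point $\xi \in \mathcal{V}(I) \cap \RR^n_{++}$. Because the entries of $\xi$ are strictly positive, one has $\xi^{\alpha} > 0$ and $\xi^{\beta_i} > 0$, while $\sigma_i(\xi) \geq 0$ as $\sigma_i$ is a sum of squares. Summing, $f(\xi) > 0$, contradicting the fact that $f$ vanishes on $\mathcal{V}(I)$.

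For the ``only if'' direction, I would apply the Positivstellensatz from \cite{becker1986real} to the semi-algebraic system consisting of the equations $\{g = 0 : g \in I\}$ together with the strict inequalities $\{x_j > 0 : j = 1, \ldots, n\}$. By hypothesis this system is infeasible over $\RR^n$, so the Positivstellensatz produces some $g \in I$, an element $\xx^{\alpha}$ of the multiplicative monoid generated by the $x_j$, and an element $h$ in the preordering generated by $\{x_1,\ldots,x_n\}$, satisfying $g + \xx^{\alpha} + h = 0$. The monoid in question is exactly the set of monomials $\xx^{\alpha}$, while a generic element of the preordering is a sum of terms of the form $\xx^{\beta}\sigma(\xx)$ with $\sigma$ a sum of squares and $\beta\in\{0,1\}^n$. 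Rearranging, $-g = \xx^{\alpha} + h$ is the desired element of $I$.

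There is no serious obstacle beyond invoking the correct version of the Positivstellensatz; the only point needing a line of justification is the identification of the preordering generated by the coordinate functions $x_1,\ldots,x_n$ with the set of expressions $\sum_i \xx^{\beta_i}\sigma_i(\xx)$. Since every generator is a variable, products of generators are themselves monomials, and any squared monomial factor inside such a product can be absorbed into the sum-of-squares multiplier, so the stated form captures the entire preordering and the theorem follows.
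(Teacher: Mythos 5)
Your argument is correct and matches the paper's own approach, which merely states that the theorem is ``a direct application of the version in \cite{becker1986real}'' without writing out the details; you have spelled out exactly that application, including the correct identification of the preordering generated by $x_1,\ldots,x_n$. (One small imprecision: in the standard Positivstellensatz the monoid element enters squared, so the certificate reads $g+(\xx^\gamma)^2+h=0$, which forces $\alpha=2\gamma$ to be an even multi-index, but a squared monomial is still a monomial and the conclusion is unaffected.)
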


Applied to Proposition \ref{prop:Grassmannian}, the witness polynomials given by this theorem are known as \emph{final polynomials} (see Corollary 4.22 of \cite{CSG}) and have been used since the 1980s for certifying non-realizability of polytopes, see e.g., \cite{bokowski1990finding}, \cite{BJS90}, \cite{RG93}, \cite{FMNR09}, \cite{MP15} and \cite{F20}.
Searching for certificates using the full strength of Theorem \ref{thm:certificates} is possible using semidefinite programming. However, dealing with sums of squares is not always desirable, since semidefinite programming has numerical and scalability issues that are not present in linear programming. A simple alternative is to  consider only scalar~$\sigma_i$.

We will call a polynomial \emph{positive} if it is non-zero and has only non-negative coefficients. Such a polynomial can obviously never vanish in the positive orthant, so we have the following simple proposition, that we can see as a weakening of Theorem \ref{thm:certificates}.

\begin{proposition}
Given a real variety $\mathcal{V}(I) \subseteq \RR^n$, if $I$ has a positive polynomial then
$\mathcal{V}(I)$ has no positive points.
\end{proposition}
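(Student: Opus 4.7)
The plan is to observe that the proposition is essentially a one-line consequence of the definitions, and to present the argument by contrapositive. Suppose that $\mathcal{V}(I)$ contains a positive point $\xi \in \RR_{++}^n$. Since $I$ is the vanishing ideal of $\mathcal{V}(I)$, or at least every element of $I$ vanishes on $\mathcal{V}(I)$, any $p \in I$ must satisfy $p(\xi) = 0$. The strategy is then to show that no positive polynomial can vanish at $\xi$, which yields the contrapositive: no positive polynomial lies in $I$.

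To carry this out, I would write an arbitrary positive polynomial $p$ as $p(\xx) = \sum_\alpha c_\alpha \xx^\alpha$ where every $c_\alpha \geq 0$ and at least one $c_\alpha$ is strictly positive (by the assumption that $p$ is non-zero). Since $\xi \in \RR_{++}^n$, each coordinate $\xi_i$ is strictly positive, and therefore every monomial evaluation $\xi^\alpha$ is strictly positive. Consequently $p(\xi) = \sum_\alpha c_\alpha \xi^\alpha$ is a sum of non-negative terms with at least one strictly positive summand, so $p(\xi) > 0$. This contradicts $p(\xi)=0$.

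Combining the two steps gives the claim: the existence of a positive polynomial in $I$ forces $\mathcal{V}(I) \cap \RR_{++}^n = \emptyset$. There is no genuine obstacle here; the only subtlety worth flagging is to make explicit the convention that ``positive polynomial'' requires non-zeroness (otherwise the zero polynomial trivially belongs to every ideal and the statement would be vacuous). The proposition should be read as the degenerate case of Theorem~\ref{thm:certificates} where each sum-of-squares multiplier $\sigma_i$ is constrained to be a non-negative scalar, which is precisely what makes it amenable to linear programming rather than semidefinite programming, as discussed in the preceding paragraph.
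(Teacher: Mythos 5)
Your argument is correct and is essentially the same one the paper intends: the definition of a positive polynomial (non-zero, non-negative coefficients) forces strict positivity on $\RR_{++}^n$, which is incompatible with vanishing on $\mathcal{V}(I) \cap \RR_{++}^n$. The paper compresses this into the single remark that a positive polynomial ``can obviously never vanish in the positive orthant,'' so your write-up simply supplies the details the authors left implicit.
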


This seems like a dramatic weakening of the certificate, and it is fair to ask if it has any use whatsoever in this form. The answer is that it can still be quite effective.
For principal ideals, for example, Polya's Theorem on non-negativity over the simplex \cite{polya1928positive} tells us that {if $\mathcal{V}(\langle p \rangle)$ has no non-negative points, then there is indeed a positive polynomial in $\langle p \rangle$. This is stronger than demanding no positive points, but similar.} In the general case, the picture is not much different. Building on work of Handelman \cite{handelman1985positive}, Einsiedler and Tuncel \cite{ET01} give a full characterization of when positive polynomials exist.

\begin{theorem}
An ideal $I$ has a positive polynomial if and only if for any $\ww \in \RR^N$ the variety of the initial ideal $\textup{in}_{\ww}(I)$ has no positive point.
\end{theorem}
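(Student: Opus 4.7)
The plan is to prove the two implications separately. The forward direction is a one-line consequence of how initial forms interact with non-negative coefficients, whereas the converse carries the real content and I would reduce it to a finite-dimensional LP duality followed by a compactness / tropical rescaling step, ultimately invoking the machinery of Einsiedler--Tuncel \cite{ET01}.

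For the forward direction, suppose $p = \sum_{\alpha} c_\alpha \xx^\alpha \in I$ is positive, i.e.\ $c_\alpha \geq 0$ and not all zero. For any weight $\ww \in \RR^N$, the initial form $\textup{in}_\ww(p)$ is precisely the subsum over those $\alpha$ in the support of $p$ that maximize $\langle \alpha, \ww \rangle$. This subsum inherits non-negative coefficients and is non-zero, so it is itself a positive polynomial, and it lies in $\textup{in}_\ww(I)$. The preceding proposition then forces $\mathcal{V}(\textup{in}_\ww(I)) \cap \RR^N_{++} = \emptyset$.

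For the converse I work by contrapositive: assume $I$ has no positive polynomial and produce $\ww$ and $\xi \in \RR^N_{++}$ with $\xi \in \mathcal{V}(\textup{in}_\ww(I))$. The first step is finite-dimensional Farkas-type duality. For each degree bound $d$, identify polynomials of total degree at most $d$ with $\RR^{M_d}$ via their coefficients and set $I_{\leq d} := I \cap \RR^{M_d}$. By hypothesis $I_{\leq d} \cap \RR^{M_d}_{+} = \{0\}$, so there exists a strictly positive vector $u^{(d)} \in \RR^{M_d}_{++}$ orthogonal to $I_{\leq d}$, i.e.\ a positive assignment of values to all monomials of degree at most $d$ annihilating every element of $I$ up to that degree. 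Normalizing the value on $1$ and extracting a diagonal subsequence stitches these into a positive linear functional $\Lambda$ on the whole polynomial ring that vanishes on $I$.

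The main obstacle, and the reason a direct elementary proof is not feasible, is that such a $\Lambda$ need not be multiplicative, so it does not immediately arise as evaluation at a single positive point. The remedy is a tropical/logarithmic rescaling: work with the exponential coordinates $\log \Lambda(\xx^\alpha)$ as $\alpha$ varies, rescale so that these grow linearly in $\alpha$, and extract a limit direction $\ww \in \RR^N$ together with limiting coordinate values $\xi_1,\dots,\xi_N > 0$. The content of the Einsiedler--Tuncel theorem \cite{ET01}, building on Handelman \cite{handelman1985positive}, is precisely that this rescaling can be arranged so that the resulting $\xi$ is an honest positive zero of $\textup{in}_\ww(I)$; in a full write-up I would cite that result at this step rather than carry out the compactness argument by hand.
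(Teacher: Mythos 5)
The paper does not prove this statement: it cites the result to Einsiedler--Tuncel \cite{ET01}, building on Handelman \cite{handelman1985positive}, and uses it as a black box. So there is no ``paper's own proof'' to compare against; your attempt supplies more detail than the source does.

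Your forward direction is correct and genuinely elementary: for any $\ww$, a positive $p\in I$ has a positive initial form $\textup{in}_\ww(p)\in\textup{in}_\ww(I)$, and the preceding proposition then rules out positive points of $\mathcal{V}(\textup{in}_\ww(I))$. For the converse, you correctly identify the two ingredients --- a finite-dimensional Farkas/Stiemke step giving, for each degree $d$, a strictly positive vector annihilating $I_{\leq d}$, and a rescaling/limit step producing the weight $\ww$ and a positive zero of $\textup{in}_\ww(I)$ --- and you are explicit that you would cite \cite{ET01} for the second. One caveat on the presentation: the construction of the positive functional $\Lambda$ by ``normalizing on $1$ and extracting a diagonal subsequence'' is stated as if routine, but fixing only $u^{(d)}_1=1$ does not control the remaining coordinates, which can escape to $0$ or $\infty$; so $\Lambda$ may fail to exist as a strictly positive functional in the naive limit. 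That degeneration is not a side technicality separate from the tropical step --- it is exactly what the weight $\ww$ records, so the two stages you describe are really a single intertwined argument, and the deferral to \cite{ET01} already begins at the construction of $\Lambda$, not after it. With that caveat noted, the sketch is a reasonable outline of what the cited paper does, and is strictly more informative than the paper's own treatment (a bare citation).
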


Again, asking that no variety of an initial ideal has a positive point is stronger than simply asking that the ideal has no positive point, but it comes close enough for it to suggest that these types of certificates can still be quite effective. In fact, as we will see, the classical approaches to obtain final polynomials for certifying non-realizability of polytopes have generally relied precisely on constructing positive polynomials in the ideal using combinatorial arguments.

In order to use Proposition \ref{prop:Grassmannian} to effectively build witnesses to non-realizability, there is another issue: getting a good handle on the ideal of $\Gr(P)$. We know that\break $\Gr(P)=\mathcal{V}((I^\Gr(d+1,n) + \langle \Gamma_0 \rangle ) \cap \RR[\Gamma_1])$. However, the ideal $I^\Gr(d+1,n)$ of the Grassmannian is complicated in general; thus, in practical computations, one works with the subideal $I^\textup{tri}(d+1,n)$ generated by the \emph{$3$-term Pl\"ucker relations}, which have the form $x_{ijS}  x_{klS} - x_{ikS}x_{jlS} + x_{ilS}x_{jkS}$, where $S$ is a fixed set of indices different from $i,j,k,l$.

The most popular classic method for proving non-realizability of polytopes is perhaps the special class of final polynomials introduced in \cite{bokowski1990finding}, the \emph{bi-quadratic final polynomials}. These are a special type of positive polynomials in $(I^\textup{tri}(d+1,n) +  \langle \Gamma_0 \rangle ) \cap \RR[\Gamma_1]$ that can be constructed efficiently with linear programming. Another method to construct final polynomials was recently proposed in \cite{pfeifle2020positive}, the \emph{positive Pl\"ucker tree certificates}. Again, these are in fact  positive polynomials in\break $(I^\textup{tri}(d+1,n) +  \langle \Gamma_0 \rangle ) \cap \RR[\Gamma_1]$, constructed with some non-trivial combinatorial reasoning and integer programming.

These methods require additional reasoning or assumptions on the form of the desired positive polynomials because brute force search in the Grassmannian model is limited by how quickly the space of possible polynomials grows. By working directly with the slack ideal model, we see a trade-off in complexity. The ideal generators are generally of higher degree (up to the degree $d+2$ of the minors we start with), but in a smaller number of variables that can be further reduced in many cases by parametrizing the variety.

\section{Finding positive polynomials in general ideals}\label{sec:pos_polys}

Before specializing to slack ideals, we first outline a general approach to finding positive polynomials in general ideals and their parametrized counterparts.

Given an ideal $I=\langle f_1,\dots,f_k\rangle$, which we may assume homogeneous without loss of generality, if we want to check the existence of a positive polynomial in $I$, we want to check if there exists a non-zero polynomial of the form
\[
\sum_{\beta} \xx^{\beta}\left(\sum_{i=1}^k c^{\beta}_i f_i\right),
\]
where the $c^{\beta}_i$ are real numbers, that has only non-negative coefficients. Moreover, if there is any positive polynomial, there is a homogeneous one, so we can fix the degree $D$ and consider the set $I_D$ of all products $\xx^{\beta} f_i$ with $|\beta|=D-\deg(f_i)$. For simplicity suppose $I_D=\{q_1,\dots,q_N\}$. We are then simply searching for a non-zero polynomial $\sum_{i = 1}^N c_i q_i$ with non-negative coefficients. By writing $q_i(\xx) = \sum_{|\alpha|=D} a_i^{\alpha} \xx^{\alpha}$, this simply becomes the linear feasibility question
\begin{equation} \begin{array}{rl}
\textrm{find } c \in \RR^N \textrm{ s.t. } & \displaystyle \sum_{i=1}^N c_i a_i^{\alpha} \geq 0 \textrm{ for all } |\alpha|=D; \\
                                            & \displaystyle\sum_{i=1}^N \sum_{|\alpha|=D} c_i a_i^{\alpha} = 1.
\end{array}\end{equation}
Alternatively, after a little manipulation, we can dualize this problem to the problem of checking if
\begin{equation} \begin{array}{rl}
\textrm{max } \lambda \in \RR \textrm{ s.t. } & \displaystyle \sum_{|\alpha|=D} y_\alpha a_i^{\alpha} = 0 \textrm{ for } i=1,\dots, N; \\
                                            & y_{\alpha} \geq \lambda \textrm{ for all } |\alpha|=D;
\end{array}\end{equation}
is zero or $+ \infty$. We can think of this dual formulation as of asking if the linearization of the system $q_i(\xx)=0$, for $i=1, \dots, N$, attained by replacing each monomial by a distinct variable,  has a solution on the positive orthant.

When trying to apply this method directly to our problem of realizability of spheres, one immediately runs into problems. For $P$ a $d$-dimensional polytope, the usual way to generate the slack variety is by taking all the $(d+2)$-minors of the slack matrix, so $D$ must be at least $d+2$. The number of variables is the number of non-zero entries of the slack matrix, which can be up to $(n-d)m$, where
$n$ is the number of vertices and $m$ the number of facets, and is never much lower than that in the most interesting cases. This means that the number of monomials of degree $D$ in those variables is $\binom{(n-d)m+D-1}{D}$  with $D \geq d+2$. The LP feasibility question we want to solve has, in the primal formulation, this many constrains, and a number of variables that also grows exponentially. Even for polytopes with few vertices and facets in low dimension, this soon gets out of reach for any LP solver. In our case, however, we will see that there is a natural parametrization of the variety associated to the ideal that we can exploit.

Suppose there is a set of variables $\yy=(y_1,\dots,y_m)$ and polynomials $g_1,g_2,\dots,g_N$ such that
\[
\{(g_1({\yy}),\dots, g_N({\yy})) \, : \, \yy \in \RR^m\}
\]
coincides with the variety $\mathcal{V}(I)$. Then positive polynomials on $I$ would immediately translate to relations of the type
\[
\sum_{\beta} c_{\beta}  g^{\beta}(\yy) = 0,
\]
for $c_{\beta}>0$, and where  $g^{\beta}(\yy)$ denotes the product $g_1({\yy})^{\beta_1} g_2({\yy})^{\beta_2} \cdots g_N({\yy})^{\beta_N}$. Reciprocally, any relation of this type will immediately imply that the positive polynomial $p(\xx)=\sum c_{\beta} \xx^\beta$ vanishes on $\mathcal{V}(I)$, and so $\mathcal{V}(I)$ has no positive points (and its vanishing ideal contains a positive polynomial). By limiting our search to a finite set $J$ of $\beta$'s and denoting $g^{\beta}(\yy) = \sum_{\alpha} a_{\beta}^\alpha \yy^{\alpha}$ we transform the search for such certificates into the LP feasibility problem
\begin{equation} \label{eq:primalparametrized}
 \begin{array}{rl}
\textrm{find } c \in \RR^{|J|} \textrm{ s.t. } & \displaystyle \sum_{\beta} c_{\beta} a_{\beta}^{\alpha} = 0 \textrm{ for all } \alpha; \\                                             & \sum_{\beta \in J} c_{\beta}=1;\\
                                            & c_{\beta} \geq 0 \textrm{ for all } \beta \in J.
\end{array}
\end{equation}
Again, by manipulating and dualizing we get the dual formulation
\begin{equation} \label{eq:dualparametrized}
\begin{array}{rl}
\min_{\ww,\lambda} \lambda \in \RR \textrm{ s.t. } & \displaystyle \sum_{\alpha}  a_{\beta}^{\alpha}w_{\beta} \geq - \lambda  \textrm{ for all } \beta,
\end{array}
\end{equation}
which has optimal solution zero if the original formulation is infeasible and $-\infty$ if it is feasible. Note that one can think of this last formulation as simply the standard linearization of the semialgebraic optimization problem

\begin{equation}
\begin{array}{rl}
\min_{\yy,\lambda} \lambda \in \RR \textrm{ s.t. } & g^{\beta}(\yy) \geq - \lambda  \textrm{ for all } \beta.
\end{array}
\end{equation}

\section{Application to realizability of spheres}\label{sec:slack_setting}

Consider a realized polytope $P$ with vertices $\vv_1,\ldots, \vv_n\in\RR^d$, facets defined by inequalities $\aalpha_j^\top\xx\leq b_j$, with $\aalpha_j\in\RR^d$ and $b_j \in \RR$, and  slack matrix $S_P$ with entries $(S_P)_{i,j} = b_j - \aalpha_j^\top\vv_i.$ Now, from \cite{Gouveia2013WhichNM} we know that the rows of $S_P$ form a linearly equivalent realization of $P$. In particular, the rows of any $d+1$ linearly independent columns of $S_P$ also form such a realization. Furthermore, if we have a symbolic slack matrix $S_P(\xx)$,
then we can determine $d+1$ necessarily linearly independent columns by taking a flag of facets. In \cite{GMWthirdpaper}, (if all other facets are simplicial) we call the symbolic slack matrix restricted to these columns a {\em reduced slack matrix}.

Fix one such flag, and let $u_{i}^\top$ be the row corresponding to vertex $i$ in the submatrix of $S_P$ whose columns are those indexed by the flag. Let ${i_1},\ldots, {i_d}$ index an affine basis for facet $F_j$, and consider the linear operator
\[
l_j(x) = \det \begin{bmatrix}  \rule{0.5pt}{8pt} & &\rule{0.5pt}{8pt} & \rule{0.5pt}{8pt} \\[-6pt] u_{i_1} & \cdots & u_{i_d} & x \\  \rule{0.5pt}{8pt}&&\rule{0.5pt}{8pt}&\rule{0.5pt}{8pt} \end{bmatrix}.
\]
This operator vanishes on every $u_i$ such that vertex $i$ is in facet $F_j$, since the columns will be linearly dependent. On the other hand, it does not vanish for every $i$ since the rank of the submatrix of~$S_P$ we are considering is $d+1$. In particular, it will be non-zero whenever a vertex $i$ is not in facet~$F_j$. So we have a linear operator that is zero on the facet and non-zero elsewhere which means that there exists a non-zero real $\lambda_j$ such that for all $i$ we have
\[
l_j(u_i)= \lambda_j (b_j - \aalpha_j^\top\vv_i).
\]
Hence, the matrix $[l_j(u_i)]_{i,j}$ is almost a slack matrix of $P$. The only thing we have to be careful with is the sign of this determinant, for which we have to pay attention to the orientation of $P$.

Given a simplex $\Delta = \conv\{\xx_0,\ldots, \xx_d\}\subseteq\RR^d$,  we can define the {\em orientation} of~$\Delta$ by the sign of
\[
\det \begin{bmatrix} 1 & \cdots & 1 \\ \xx_0 & \cdots & \xx_d \end{bmatrix}.
\]
Since this sign depends on the order of the vertices $\xx_i$, we call an ordering of $0,\ldots, d$ an {\em orientation} of the simplex $\Delta$.

Given an affine basis $B = \{i_0,\ldots, i_{d-1}\}$ for a facet $F$ of a $d$-polytope $P$, an ordering on $B$ determines an orientation of each simplex $B\cup \{v\}$ for $v\in\textup{Vert}(P)\backslash F$ by first taking the ordered elements of $B$ followed by $v$. Since $P$ is a polytope, for a fixed order on $B$, all simplices of this form have the same orientation.

\begin{example} Let $P$ be the triangular prism given by $\conv\{\boldsymbol{0}, e_1, e_2, e_3, e_1+e_3, e_2+e_3\}$. Then a basis for facet $F = \conv\{\boldsymbol{0}, e_1, e_3, e_1+e_3\}$ is given by vertices $1,2$, and $5$ and the sign of the determinants whose columns are indexed by $\{1,2,5,3\}$ and $\{1,2,5,6\}$ is negative:
\[
\det \begin{bmatrix}
1&1&1&1 \\
0&1&1&0 \\
0&0&0&1 \\
0&0&1&0
\end{bmatrix} = \det \begin{bmatrix}
1&1&1&1 \\
0&1&1&0 \\
0&0&0&1 \\
0&0&1&1
\end{bmatrix} =-1.
\]
\label{EX:tripri}
\end{example}

Since $P$ is a polytope, in fact, we can order the vertices of each facet basis so that every  simplex of vertices of $P$ as above has the same orientation. For brevity, we will call a set of facet bases ordered in this way {\em oriented}. Furthermore, we can order the vertices of each facet of $P$ so that the elements of $B$ appear first in the order corresponding to this orientation, followed by the remaining vertices of $F$. When facets are written in this way we will also say they are {\em oriented}.

\begin{example} The triangular prism in Example~\ref{EX:tripri} has facets $123, 456, 1245, 1346, 2356$, and one can check that the set of bases $123, 654, 152, 134, 356$ is oriented. Then $123, 654, 1524, 1346, 3562$ are oriented facets of $P$.
\end{example}

We have defined orientations starting from a realization of $P$ for simplicity. However, one often wishes to determine such an orientation using only the combinatorics of $P$. Even without a realization, we can determine relationships between the orientations of certain facet bases using properties of determinants. For example, if two facets intersect in $d-1$ elements, say $F_1\cap F_2 = \{i_1,\ldots, i_{d-1}\}$, and $j_1\in F_1\backslash F_2$, $j_2\in F_2\backslash F_1$, then we have
\[
\det\begin{bmatrix} \vv_{i_1} & \dots & \vv_{i_{d-1}} & \vv_{j_1} & \vv_{j_2} \end{bmatrix} =  -\det\begin{bmatrix} \vv_{i_1} & \dots & \vv_{i_{d-1}} & \vv_{j_2} & \vv_{j_1} \end{bmatrix},
\]
so that if $ \{i_1,\ldots, i_{d-1},j_k\}$ is a facet basis for $F_k$, $k=1,2$, then they must have opposite orientations. We will talk more about how to determine an orientation later, but will often assume that an orientation is already known.

Returning to our discussion on the linear operator $l_j(x)$, if we assume $P$ is oriented and that $\vv_{i_1},\ldots, \vv_{i_d}$ are always chosen as an oriented affine basis for facet $F_j$, then all the $l_j(u_i)$ will have the same sign, and so the matrix  $[l_j(u_i)]_{i,j}$  is, up to column and row scaling by positive scalars, either $S_P$ or $-S_P$ (depending on the orientation chosen). We can assume to have picked a positive orientation in what follows. We therefore get a parametrization of every entry of the slack matrix as a determinant of the entries in the columns indexed by the flag.

Thus, given a set of oriented facets for $P$, we can construct a parametrization of the slack variety as follows.
\begin{enumerate}
\item Choose a flag $\mathcal{F}$ of facets of $P$.
\item Form the reduced symbolic slack matrix $S_\mathcal{F}(\xx)$. Let $u_i^\top$ denote the  $i$th row of this matrix.
\item Calculate a parametrized slack matrix in the reduced slack variables using:
	\[
    \left(S_P(\xx_\mathcal{F})\right)_{i,F} = \det\begin{bmatrix} u_{F(1)}^\top & \cdots & u_{F(d)}^\top & u_i^\top \end{bmatrix},
    \]
    where $F(1),\ldots, F(d)$ are the elements of the oriented facet basis for $F$.
\end{enumerate}
\begin{remark}
Notice that when $F$ is a non-simplicial facet, there is not a unique choice of vertices for a facet basis. However, since each basis choice defines the same facet, the resulting columns must be linearly dependent. Therefore, in the above parametrization, we may add redundant columns to the reconstructed slack matrix corresponding to the different choices of facet basis. This will not change the properties of the slack matrix, but may add new polynomials to the parametrization and increase the chances of finding a non-realizability certificate (see the last part of Example \ref{E.P3513}).
\end{remark}

Another way to think of the determinants we use to parametrize the slack matrix is as Pl\"ucker coordinates of the reduced slack matrix. That is, if $B = \{F(1),\ldots, F(d)\}$ is the facet basis for $F$,
\[
 \det\begin{bmatrix} u_{F(1)}^\top & \cdots & u_{F(d)}^\top & u_i^\top \end{bmatrix}  = p_{B \cup \{i\}}(S_{\mathcal{F}}(\xx)),
\]
where we recall that the Pl\"ucker coordinates of a rank $d+1$ matrix $A\in\RR^{n\times (d+1)}$ are indexed by sets of $d+1$ rows $p_{i_0,\ldots, i_d}(A)$. This is the classic parametrization of the Grassmannian and specializes to a natural parametrization of the variety $\Gr(P)$. On the other hand, this is not truly a parametrization of the entirety of the slack variety, as we are not free to scale columns, but it gives us at least an element per equivalence class, modulo positive column scalings, which is enough for our purposes.

\begin{example} \label{ex:prism}
Consider the triangular prism with oriented facets $132, 645, 1254, 1436, 3652$. The last four facets form a flag of facets whose reduced slack matrix is
\[
S_\mathcal{F}(\xx) = \begin{bmatrix}
x_{1,1} & 0 & 0 & x_{1,4} \\
x_{2,1} & 0 & x_{2,3} & 0 \\
x_{3,1} & x_{3,2} & 0 & 0 \\
0 & 0 & 0 & x_{4,4} \\
0 & 0 & x_{5,3} & 0 \\
0 & x_{6,2} & 0 & 0
\end{bmatrix}
\]
and the parametrized slack matrix is
\[
S_P(\xx_{\mathcal{F}}) =
\begin{small}
\begin{bmatrix}
0 & x_{1,1}x_{4,4}x_{5,3}x_{6,2} & 0 & 0 & x_{1,4}x_{3,1}x_{5,3}x_{6,2} \\
0 & x_{2,1}x_{4,4}x_{5,3}x_{6,2} & 0 & x_{1,1}x_{2,3}x_{3,2}x_{4,4} & 0 \\
0 & x_{3,1}x_{4,4}x_{5,3}x_{6,2} & x_{1,4}x_{2,1}x_{3,2}x_{5,3} & 0 & 0 \\
x_{1,1}x_{2,3}x_{3,2}x_{4,4} & 0 & 0 & 0 & x_{3,1}x_{4,4}x_{5,3}x_{6,2} \\
x_{1,4}x_{2,1}x_{3,2}x_{5,3} & 0 & 0 & x_{1,1}x_{3,2}x_{4,4}x_{5,3} & 0 \\
x_{1,4}x_{2,3}x_{3,1}x_{6,2} & 0 & x_{1,4}x_{2,1}x_{5,3}x_{6,2} & 0 & 0
\end{bmatrix}.
\end{small}
\]
In this case, we can directly see that the parametrized slack variety contains a realization (a point for which all non-zero entries of the slack matrix have the same sign, namely when we set all variables to 1).
\end{example}

We are now ready to search for certificates using the linear program \eqref{eq:dualparametrized}, since we have a parametrization of $\Gr(P)$ by  polynomials of degree $d+1$ in the variables of the reduced symbolic slack matrix. In order to further reduce the computational  effort, we will however need some further considerations.

Note that the slack variety is invariant under scalings of rows and columns. This means that we can always scale rows and columns by positive scalars as to fix some entries to be one without loss of generality (see \cite[Lemma 5.2]{GMTWsecondpaper}): if there was a positive point in the variety before there is still one now. The same is true for the parametrized version: if we scale the rows and columns of the reduced slack matrix to fix some entries to be one, we obtain a dehomogenized version of the parametrized variety for which the same tools as before can be used. A more rigorous discussion of the scaling procedure can be found in \cite[Section 3]{MW20}, but we can see below some examples of the procedure.

\begin{example} Recall the triangular prism $P$ from Example \ref{ex:prism} with the same orderings and labellings, and the same reduced slack matrix. We can scale the first three rows to set $x_{1,1}, x_{2,1}$ and $x_{3,1}$ to be one, then the last three columns to set $x_{3,2}$, $x_{2,3}$ and $x_{1,4}$ to also be one, and finally the last three rows to set the remaining variables to one. In fact, we can eliminate all variables, obtaining a single point in the parametrized variety, which is
\[
\begin{bmatrix}
0 & 1 & 0 & 0 & 1 \\
0 & 1 & 0 & 1 & 0 \\
0 & 1 & 1 & 0 & 0 \\
1 & 0 & 0 & 0 & 1 \\
1 & 0 & 0 & 1 & 0 \\
1 & 0 & 1 & 0 & 0
\end{bmatrix}
\]
and has only positive signs outside of the forced zeroes. Thus, $P$ is realizable. Note that the reason we can have such dramatic reduction of the dimension of the slack variety is that the triangular prism is projectively unique.
\end{example}

We will denote by $S^H(\xx)$ the original, homogenous, parametrized slack matrix, and by $S(\xx)$ the dehomogenized version. In most circumstances we will drop the variables from the notation and simply use $S^H$ and $S$ if there is no contextual ambiguity. We will use the polynomials $S_{i,j}$, the entries of the dehomogenized slack matrix, as the polynomials for applying the schemes \eqref{eq:primalparametrized} or \eqref{eq:dualparametrized}.

\section{Computational results}\label{sec:results}

\subsection{General framework}

Given a candidate abstract polytope $P$, for which we have computed  the dehomogenized parametrized slack matrix $S$ as described in the previous section, we will proceed as follows. For fixed positive integers $k,l$ we will construct the set $\mc G_{k,l}$ of all products of at most $k$ entries (possibly repeated) of $S$, each entry with degree at most $l$. Let $\mc G_{k,l}=\{g_1,\dots,g_m\}$ and $g_i(\xx) = \sum_\alpha a^{\alpha}_i \xx^{\alpha}$; then we will solve the slight modification of \eqref{eq:primalparametrized} given by
\begin{equation} \label{eq:primalparametrizedv2}
\begin{array}{rl}
\displaystyle \textrm{min } 1- \sum_{i=1}^m c_i \textrm{\quad s.t. } & \displaystyle \sum_i c_i a_i^{\alpha} = 0 \textrm{ for all } \alpha; \\
 & \displaystyle \sum_{i=1}^m c_{i} \leq 1;\\
 & \displaystyle c_i \geq 0 \textrm{ for all } i=1,\dots,m.
\end{array}
\end{equation}
This problem is always feasible, and it is a simple exercise to check that its optimal value will be either $0$ or $1$. If it is $0$, that means that a certificate of non-realizability of the polytope was found, and one can write it explicitly as
\[
\sum_{i=1}^m c_i g_i(\xx) = 0,
\]
which cannot happen in a realizable polytope as all entries of the slack matrix must be strictly positive. We will call a certificate obtained in this way from \eqref{eq:primalparametrizedv2} using~$\mc G_{k,l}$ a \emph{$(k,l)$-positive polynomial}.

\begin{remark} Note that even for a fixed set $\mc G_{k,l}$, the certificates we obtain from \eqref{eq:primalparametrizedv2} are in general not unique and will depend, for example, on which algorithm is used to solve the linear program.  The examples below were obtained using the primal simplex method implemented in \textit{Gurobi $9.1.0$}. While Gurobi is an inexact solver, in all the examples we computed all recovered coefficients were integers (in fact, plus or minus one). Furthermore, we always double-check that the certificates sum to zero using the reconstructed slack matrix, i.e., symbolic computations, which is a cheap exact verification of the certificate obtained. In case the inexactness causes problems in particular instances, one could resort to an exact LP solver.
\end{remark}

In what follows we explore two concrete examples of non-realizable spheres, to explicitly illustrate how these certificates work.

\begin{example}\label{E.N10_3574}
Let $P$ be the $4$-dimensional simplicial sphere $N_{3574}^{10}$ in \cite{A77} with $10$ vertices and $35$ facets. This was shown to be non-realizable by Joswig and R\"orig in \cite[Remark 2.4 and page 227]{JR07}. We recover this result with our algorithm. Under some vertex labeling, we have a flag
\[
\mc F = \{F_1=\{3, 4, 8, 9\}, F_2=\{3, 5, 9, 8\}, F_3=\{2, 3, 7, 8\}, F_4=\{3, 4, 9, 6\}, F_5=\{4, 6, 10, 9\}\},
\]
where we ordered the vertices in each facet so that the facet is positively oriented.  We consider the corresponding reduced slack matrix with the following dehomogenization:
\begin{small}
\[
S_{\mc F}(\xx) = \begin{bmatrix}
x_{1,1} & x_{1,2} & x_{1,3} & 1 & x_{1,5} \\
x_{2,1} & x_{2,2} & 0 & 1 & x_{2,5} \\
0 & 0 & 0 & 0 & 1  \\
0 & 1 & x_{4,3} & 0 & 0  \\
x_{5,1} & 0 & x_{5,3} & 1 & x_{5,5} \\
x_{6,1} & 1 & x_{6,3} & 0 & 0  \\
1 & 1 & 0 & 1 & 1  \\
0 & 0 & 0 & 1 & x_{8,5} \\
0 & 0 & 1 & 0 & 0  \\
x_{10,1} & x_{10,2} & 1 & 1 & 0
\end{bmatrix}.
\]
\end{small}
Searching for $(2,2)$-positive polynomial certificates, we find the certificate of non-realizability
\[
S_{9,8} S_{8,4} + S_{9,8} S_{5,7} + S_{10,7} S_{3,10} + S_{9,11} S_{8,4} + S_{3,9} S_{9,6} = 0,
\]
where the facets outside the flag that appear are, ordered positively,
\begin{gather*}
F_6=\{1, 3, 7, 6\}, F_7=\{3, 6, 9, 7\}, F_8=\{1, 3, 6, 10\},\\
F_9=\{4, 5, 9, 10\}, F_{10}=\{1, 4, 5, 9\} \textrm{ and  } F_{11}=\{1, 3, 10, 7\}.
\end{gather*}
\end{example}

\begin{example} \label{ex:Doolittle} In a private communication \cite{D21}, Joseph Doolittle provided us with three simplicial spheres obtained by  subdividing some facets of $N_{3574}^{10}$ (see Example \ref{E.N10_3574}). Here we consider one such simplicial sphere $P$ with $13$ vertices and $65$ facets. Under some vertex labeling, we have a flag
\[
\mc F = \{F_1=\{3, 4, 7, 11\}, F_2=\{3, 7, 13, 11\}, F_3=\{3, 4, 11, 10\}, F_4=\{1, 2, 8, 7\}, F_5=\{3, 7, 12, 13\}\},
\]
where we ordered the vertices in each facet so that the facet is positively oriented.  We consider the corresponding reduced slack matrix with the following dehomogenization:
\begin{small}
\[
S_{\mc F}(\xx) = \begin{bmatrix}
x_{1,1} & x_{1,2} & x_{1,3} & 0 & 1 \\
x_{2,1} & x_{2,2} & x_{2,3} & 0 & 1 \\
0 & 0 & 0 & 1 & 0 \\
0 & x_{4,2} & 0 & x_{4,4} & 1 \\
x_{5,1} & x_{5,2} & x_{5,3} & x_{5,4} & 1 \\
1 & 1 & 1 & 1 & 1 \\
0 & 0 & 1 & 0 & 0 \\
x_{8,1} & x_{8,2} & x_{8,3} & 0 & 1 \\
x_{9,1} & x_{9,2} & x_{9,3} & x_{9,4} & 1 \\
x_{10,1} & x_{10,2} & 0 & x_{10,4} & 1 \\
0 & 0 & 0 & x_{11,4} & 1 \\
1 & x_{12,2} & x_{12,3} & x_{12,4} & 0 \\
1 & 0 & x_{13,3} & x_{13,4} & 0
\end{bmatrix}.
\]
\end{small}
Searching again for $(2,2)$-positive polynomial certificates, we find the certificate of non-realizability
\begin{gather*}
S_{7,6} + S_{3,7} + S_{3,10} + S_{3,14} + S_{7,9} + S_{3,8} + S_{7,13} S_{3,15} + S_{3,12} S_{7,13} + S_{3,4} + S_{7,11} S_{3,15} = 0,
\end{gather*}
where the facets outside the flag that appear are
\begin{gather*}
F_6=\{2, 3, 6, 10\}, F_7=\{1, 2, 7, 5\}, F_8=\{1, 6, 10, 7\}, F_9=\{2, 3, 8, 6\}, F_{10}=\{2, 5, 10, 7\},\\
F_{11}=\{3, 6, 11, 8\}, F_{12}=\{1, 7, 8, 13\}, F_{13}=\{1, 3, 6, 11\}, F_{14}=\{1, 5, 7, 10\} \textrm{ and } F_{15}=\{1, 6, 7, 13\}.
\end{gather*}
\end{example}

\subsection{Recovering classic final polynomials}

While the certificates in the previous examples prove non-realizability independently, one might wish to derive classical final polynomial certificates in Pl\"ucker coordinates from them. As we discussed previously, the parametrization we are using for the slack matrices is, in fact, a parametrization of the Grassmannian variety, hence it should be almost automatic to switch from one to the other.
However, to translate our parametrization back to Pl\"ucker coordinates, we need to undo the scaling of the entries of the reduced slack matrix that we set to one. We will see that this rehomogenization step can sometimes complicate the translation of our certificates.

\begin{example}
In Example \ref{E.N10_3574} we derived the certificate
\[
S_{9,8} S_{8,4} + S_{9,8} S_{5,7} + S_{10,7} S_{3,10} + S_{9,11} S_{8,4} + S_{3,9} S_{9,6} = 0.
\]
If we denote by $S^H(\xx)$ the rehomogenized reconstructed slack matrix, {then in the certificate in the rehomogenized variables, we need to introduce extra variables to maintain the equality}, thus obtaining the homogeneous certificate
\begin{gather*}
S_{9,8}^H S_{8,4}^H {\bc x_{5,4}x_{7,2}} + S_{9,8}^H S_{5,7}^H {\bc x_{4,2}x_{8,4}} + S_{10,7}^H S_{3,10}^H {\bc x_{6,2}x_{8,4}} + S_{9,11}^H S_{8,4}^H {\bc x_{5,4}x_{6,2}} + S_{3,9}^H S_{9,6}^H {\bc x_{6,2}x_{8,4}} = 0.
\end{gather*}
Note that we multiplied each term by one variable from the second column and one from the fourth column of the reduced slack matrix. On the other hand, if $F_j$ is in the flag, then $S_{i,j}^H$ is a multiple of $x_{i,j}$, with the extra factor depending only on $j$. This means that we can replace such variables by slack entries maintaining the validity of the certificate
\begin{gather*}
S_{9,8}^H S_{8,4}^H S^H_{5,4}S^H_{7,2} + S_{9,8}^H S_{5,7}^H S^H_{4,2}S^H_{8,4} + S_{10,7}^H S_{3,10}^H S^H_{6,2}S^H_{8,4} + S_{9,11}^H S_{8,4}^H S^H_{5,4}S^H_{6,2} + S_{3,9}^H S_{9,6}^H S^H_{6,2}S^H_{8,4} = 0.
\end{gather*}
Factoring out $S_{8,4}^H$, we get
\begin{gather*}
S_{9,8}^H  S^H_{5,4}S^H_{7,2} + S_{9,8}^H S_{5,7}^H S^H_{4,2} + S_{10,7}^H S_{3,10}^H S^H_{6,2} +
S_{9,11}^H  S^H_{5,4}S^H_{6,2} + S_{3,9}^H S_{9,6}^H S^H_{6,2} = 0.
\end{gather*}
This can now be immediately translated into a final polynomial: \begin{align*}
p_{1, 3, 6, 10, 9}p_{3, 4, 9, 6, 5}p_{3, 5, 9, 8, 7}
+p_{1, 3, 6, 10, 9}p_{3, 6, 9, 7, 5}p_{3, 5, 9, 8, 4}
+p_{3, 6, 9, 7, 10}p_{1, 4, 5, 9, 3}p_{3, 5, 9, 8, 6}& \\
+\,p_{1, 3, 10, 7, 9}p_{3, 4, 9, 6, 5}p_{3, 5, 9, 8, 6}
+p_{4, 5, 9, 10, 3}p_{1, 3, 7, 6, 9}p_{3, 5, 9, 8, 6} &= 0,
\end{align*}
where each ordering of the Pl\"ucker coordinate is positive because it can be interpreted as the evaluation of a facet inequality on a vertex outside that facet.

{Once we have such a certificate, an alternative explanation for its validity which avoids our slack matrix framework is to prove that it is indeed a (positive) polynomial in the Grassmannian ideal. }
In this case, we observe that this polynomial can be written as the sum of the following polynomials
\begin{align*}
 & p_{1, 3, 6, 10, 9} ( -[3,5,9,4\, |\, 3,5,9,6,7,8] - p_{3,5,9,6,8}p_{3,5,9,4,7}) \\
 & p_{3, 5, 9, 8, 6} ( -[3,4,5,9\,|\,1,3,6,7,9,10] + p_{3,4,5,9,7}p_{1,3,6,9,10} ),
\end{align*}
where
$[i_1,\ldots,i_{d} \,|\,  j_1, \ldots, j_{d+2}]$ denotes the Pl\"ucker relation
\[
\sum_{k=1}^{d+2}(-1)^k p_{i_1,\ldots,i_d,j_k}p_{j_1,\ldots, \widehat{j_k},\ldots, j_{d+2}} = 0.
\]
Thus, the polynomial reduces to the following expression which is indeed zero:
\[
-p_{1, 3, 6, 10, 9}p_{3,5,9,6,8}p_{3,5,9,4,7} +
p_{3, 5, 9, 8, 6}p_{3,4,5,9,7}p_{1,3,6,9,10}.
\]
\end{example}

Transforming our certificates into traditional final polynomials might in some cases create much more complicated certificates.

\begin{example}
In Example \ref{ex:Doolittle} we derived the certificate
\begin{gather*}
S_{7,6} + S_{3,7} + S_{3,10} + S_{3,14} + S_{7,9} + S_{3,8} + S_{7,13} S_{3,15} + S_{3,12} S_{7,13} + S_{3,4} + S_{7,11} S_{3,15} = 0.
\end{gather*}
In this case the rehomogenized version looks more complicated:
\begin{gather*}
S_{7,6}^H {\bc x_{1,5}^2 x_{3,4} x_{5,5} x_{6,2} x_{7,3} x_{8,5} x_{11,5}x_{13,1}} + S_{3,7}^H {\bc x_{1,5}x_{3,4}x_{6,2}x_{6,5}x_{7,3}x_{8,5}x_{10,5} x_{11,5}x_{13,1}} +\\
S_{3,10}^H {\bc x_{1,5}^2 x_{3,4}x_{6,2}x_{6,5}x_{7,3}x_{8,5}x_{11,5}x_{13,1}} + S_{3,14}^H {\bc x_{1,5}x_{2,5}x_{3,4}x_{6,2}x_{6,5}x_{7,3}x_{8,5} x_{11,5}x_{13,1}} + \\
S_{7,9}^H {\bc x_{1,5}^2 x_{3,4}x_{5,5}x_{6,2}x_{7,3}x_{10,5}x_{11,5}x_{13,1}} + S_{3,8}^H {\bc x_{1,5}x_{2,5}x_{3,4}x_{5,5}x_{6,2}x_{7,3}x_{8,5} x_{11,5}x_{13,1}} + \\
S_{7,13}^H S_{3,15}^H {\bc x_{2,5}x_{5,5}x_{8,5}x_{10,5}} + S_{3,12}^H S_{7,13}^H {\bc x_{2,5}x_{5,5}x_{6,5}x_{10,5}} + \\
S_{3,4}^H {\bc x_{1,5}x_{3,4}x_{5,5}x_{6,2}x_{6,5}x_{7,3}x_{10,5} x_{11,5}x_{13,1}} + S_{7,11}^H S_{3,15}^H {\bc x_{1,5}x_{2,5}x_{5,5}x_{10,5}} = 0.
\end{gather*}
Noting that $S^H_{6,2}=x_{13,1}x_{6,2}x_{7,3}x_{3,4}x_{11,5}$, we can simplify this to
\begin{gather*}
S_{7,6}^H S_{6,2}^H{\bc x_{1,5}^2 x_{5,5} x_{8,5}} + S_{3,7}^H S_{6,2}^H {\bc x_{1,5}x_{6,5}x_{8,5}x_{10,5}} + S_{3,10}^H S_{6,2}^H{\bc x_{1,5}^2 x_{6,5}x_{8,5}} + S_{3,14}^H S_{6,2}^H{\bc x_{1,5}x_{2,5}x_{6,5}x_{8,5}} +\\
S_{7,9}^H S_{6,2}^H{\bc x_{1,5}^2 x_{5,5}x_{10,5}} + S_{3,8}^HS_{6,2}^H {\bc x_{1,5}x_{2,5}x_{5,5}x_{8,5}} + S_{7,13}^H S_{3,15}^H {\bc x_{2,5}x_{5,5}x_{8,5}x_{10,5}} +\\
S_{3,12}^H S_{7,13}^H {\bc x_{2,5}x_{5,5}x_{6,5}x_{10,5}} + S_{3,4}^H S_{6,2}^H {\bc x_{1,5}x_{5,5}x_{6,5}x_{10,5}} + S_{7,11}^H S_{3,15}^H {\bc x_{1,5}x_{2,5}x_{5,5}x_{10,5}} = 0.
\end{gather*}
Since there are four variables in each term, and all from column $5$, we can replace each variable by the corresponding homogeneous entry, and then translate it to Pl\"ucker coordinates to get the following degree six final polynomial, which is much more complicated than the original certificate:
\begin{gather*}
p_{2,3,6,10,7} p_{3,7,13,11,6} { p_{3,7,12,13,1}^2 p_{3,7,12,13,5}p_{3,7,12,13,8} } \\
+\,p_{1,2,7,5,3}  p_{3,7,13,11,6} { p_{3,7,12,13,1}p_{3,7,12,13,6}p_{3,7,12,13,8}p_{3,7,12,13,10} } \\
+\,p_{2,5,10,7,3} p_{3,7,13,11,6} { p_{3,7,12,13,1}^2 p_{3,7,12,13,6}p_{3,7,12,13,8} } \\
+\,p_{1,5,7,10,3} p_{3,7,13,11,6} { p_{3,7,12,13,1}p_{3,7,12,13,2}p_{3,7,12,13,6}p_{3,7,12,13,8} } \\
+\,p_{2,3,8,6,7}  p_{3,7,13,11,6} { p_{3,7,12,13,1}^2 p_{3,7,12,13,5}p_{3,7,12,13,10} } \\
+\,p_{1,6,10,7,3} p_{3,7,13,11,6} { p_{3,7,12,13,1}p_{3,7,12,13,2}p_{3,7,12,13,5}p_{3,7,12,13,8}  } \\
+\,p_{1,3,6,11,7} p_{1,6,7,13,3}  { p_{3,7,12,13,2}p_{3,7,12,13,5}p_{3,7,12,13,8}p_{3,7,12,13,10} } \\
+\,p_{1,7,8,13,3} p_{1,3,6,11,7}  { p_{3,7,12,13,2}p_{3,7,12,13,5}p_{3,7,12,13,6}p_{3,7,12,13,10} } \\
+\,p_{1,2,8,7,3}  p_{3,7,13,11,6} { p_{3,7,12,13,1}p_{3,7,12,13,5}p_{3,7,12,13,6}p_{3,7,12,13,10} } \\
+\,p_{3,6,11,8,7} p_{1,6,7,13,3}  { p_{3,7,12,13,1}p_{3,7,12,13,2}p_{3,7,12,13,5}p_{3,7,12,13,10} } = 0.
\end{gather*}
Note that there is more than one way to make this rehomogenization, so there could conceivably be easier certificates that can be derived in this way. Proving that this is indeed a final polynomial independently of our previous computations is possible, but not immediate.
\end{example}

\subsection{Implementation details}
In the beginning of the section we established our general approach, which is essentially that of solving an instance of problem \eqref{eq:primalparametrizedv2}. In this subsection we discuss additional details of the actual instantiation, explaining our implementation.

We describe the main steps in our algorithm for the search of non-realizability certificates. Let~$P$ be an abstract $d$-dimensional polytope with $n$ vertices and $m$ facets.
The inputs of our algorithm are:
\begin{itemize}
\item list of facets of $P$ given as lists of vertex labels;
\item $d$: dimension of abstract polytope $P$;
\item $\mc F$: flag as list of $d+1$ facet indices;
\item $k$: maximum number of factors in the products of constraints;
\item $l$: maximum degree of constraints to consider.
\end{itemize}
Note that the computation of a flag $\mc F$ can be done automatically from the list of facets, using the \textit{SlackIdeals} package in \textit{Macaulay2} \cite{MW20}. Our implementation uses \textit{SageMath} \cite{Sagemath}, that includes \textit{Macaulay2} and the LP solver \textit{Gurobi $9.1.0$} \cite{Gurobi}.

\subsubsection{Step 1: Constructing parametrization}

If we have a given orientation of the facets of~$P$, we can construct each parametrized entry $S_{i,j}$ of the slack matrix by simply computing the corresponding determinant, as explained in Section \ref{sec:slack_setting}.

An orientation can be computed using methods implemented in \textit{Polymake} \cite{Polymake}. Alternatively, the method we describe below orients the facets at the same time as constructing the parametrization.  While this method is not guaranteed to find an orientation, it always succeeded in our tests.

\begin{enumerate}
\item Choose a basis $B_j$ for each (non-simplicial) facet $F_j$ of $P$.
\item Use the given flag  $\mc F$ to form the corresponding dehomogenized reduced slack matrix $S_{\mc F}(\xx)$.
\item Reconstruct all entries of the slack matrix (including the entries corresponding to the facets in $\mc F$) via the appropriate determinants of $S_{\mc F}(\xx)$.
\end{enumerate}
If we were given an orientation, then we can compute the determinants so that all entries have the appropriate sign. Otherwise we proceed to simultaneously find an orientation and the correct sign of each entry of the matrix with the algorithm below.

\begin{enumerate}[resume]
\item Initialize the set $\mathcal{P}$ of known positive polynomials to be the set of all variables.
\item Check all columns whose sign is unknown for entries that are of the form {$\pm\prod q_i$}, where each $q_i\in\mathcal{P}$. If such a polynomial exists it determines the sign of that column, since $\prod q_i > 0$. Add the entries of that column, with any monomial factors removed, with the correct sign to $\mathcal{P}$.
\item We repeat the last step until the sign of every column has been determined or the set of positive polynomials is unchanged.

\end{enumerate}

\subsubsection{Step 2: Constructing constraints and their products for linear program}

Given the previously determined orientation, we have a slack matrix whose entries are Pl\"ucker coordinates with the correct sign. In particular, the set of entries and any products thereof gives us a collection of polynomials that must be positive. We choose a set of constraints by restricting to entries of degree up to $l$ and taking products of up to $k$ of these entries. Call this set of constraints $\mc G_{k,l}$.

We store these products as a matrix $\mc M_{k,l}$ of coefficients, where each row represents a constraint in $\mc G_{k,l}$ and each column is a monomial that appears in some constraint. By storing only the coefficients of the (distinct) monomials, this matrix effectively records the linearization of our constraints. Thus we now have the constraints of a linear program.

\subsubsection{Step 3: Solving linear program with Gurobi}

\begin{enumerate}
\item We solve the linear program \eqref{eq:primalparametrizedv2} whose coefficient matrix is the transpose of $\mc M_{k,l}$ using the primal simplex method in \textit{Gurobi 9.1.0}.
\item If the optimal solution is zero, then we find the indices of the non-zero dual variables, which correspond to an infeasible set $\mc O$ of primal constraints.
\end{enumerate}

\subsubsection{Step 4: (Optional) Rehomogenizing infeasibility certificate}

As we saw in the previous section, it is possible, but not necessary, to rehomogenize the certificates attained in the last step. That is done with the following method.

\begin{enumerate}
\item As in Step 1 (2)--(3), use the given flag $\mc F$ to form the corresponding (homogeneous) reduced slack matrix $S_{\mc F}^H(\xx)$. Reconstruct all entries of the slack matrix (including the entries in the facets in $\mc F$) via the appropriate determinants of $S_{\mc F}^H(\xx)$ and using the orientation of facets determined at Step 1. (Unlike Step 1, here we do not dehomogenize $S_{\mc F}(\xx)$).
\item {We recompute the entries of the reconstructed slack matrix $S^H(\xx)$ that correspond to entries used in the certificate $\mc O$ and then identify the variables we need to multiply each entry by to maintain the certificate validity; that is, we want}
    \[
    \sum_{q \in \mc O} \beta_q q = 0,
    \]
    where the polynomials $q$ are entries of $S^H(\xx)$ and $\beta_q$ are monomial factors.
\end{enumerate}

\subsection{Constraint selection heuristics} \label{S.constraintSelection}

While the proposed approach yields results in several interesting cases, its computational difficulty grows quickly with the number of vertices and facets. This motivates the need for heuristic techniques to reduce the size of the problem when trying to tackle polytopes whose slack matrices are too big for the full strength of our proposed approach. We provide three such heuristics, that can be used individually or combined, in order to derive certificates for larger problems.
\medskip

{\bf Vertex avoidance. \!} Examining the certificates obtained using the full power of our methods, certain trends can be observed. Take the certificate of Example \ref{ex:Doolittle}, where we can see that none of the slack entries used corresponds to any of the vertices $\{4,9,12\}$ or any facet containing at least one of them. These three vertices actually form a triangular face of this polytope.
This type of behaviour, where the obtained certificates avoid the rows indexed by the vertices of a fixed face and the columns indexed by any facet that intersects this face, seems to be extremely common, and motivates our first heuristic simplification: pick a face $G$ of the polytope and remove from the parametrized slack matrix the rows of the vertices of $G$ and the columns of facets intersecting~$G$.
\medskip

{\bf Vertex fixing. \!} If, instead of absent vertices, we focus on vertices that do appear in the certificate, another pattern emerges. Again looking at Example \ref{ex:Doolittle} we can now see that if we take any entry $(i,j)$ of the slack matrix appearing in that certificate and take the union of the vertex $i$ and the vertices of the facet $F_j$, it always contains both vertices $3$ and $7$, which are the vertices of an edge of the polytope. Again, this is a behaviour that can be repeatedly observed in our numerical testing, suggesting a second heuristic simplification: pick a face $G$ of the polytope and consider only entries $(i,j)$ of the parametrized slack matrix such that $\{i\}\cup F_j$ contains $G$. \medskip

While we formulate and use the previous two heuristics in terms of faces, it might be useful in some cases,  particularly in polytopes with a large number of vertices, to avoid or fix a more general set of vertices, not necessarily forming a face.

\medskip
{\bf Monomial simplification. \!}  A somewhat different reduction that can be done is an algebraic simplification. When computing the parametrized slack matrix, the reconstructed entries of a column in the flag are simply the original entries of the corresponding column of the reduced slack matrix (whose entries are either $0$, $1$ or a single variable) all multiplied by the same polynomial, which is the $d$-minor indexed by the remaining columns of the flag and the vertices of the chosen facet basis of $F$. This minor has to be non-negative, and can be added to the set of polynomial constraints. In fact, quite often there is more than one variable that can be factored out in each entry, maintaining the positivity of the remaining expression. This means that when restricting the constraints to be used by degree, we may attain a richer set at a lower degree, thus obtaining solutions at easier computational regimes. Adding these minors is our third and last proposed heuristic and its effect can be seen at the end of Example \ref{E.P3513}. If any of the facets in the chosen flag is not a simplex, we will also add redundant columns with all possible choices of facet bases, and perform this factorization for each of them, as that can lead to different low degree polynomials in our constraint set, strengthening its expressive power.

\medskip

\begin{example} \label{E.P3513}
Let $P$ be prismatoid \#3513 from \cite{CS19}. It is $5$-dimensional and has $14$ vertices and $94$ facets, two of which are non-simplicial. Note that the corresponding slack matrix has $651$ distinct non-constant entry values, $230$ of them with degree $\leq 3$, in $31$ variables. Computing all products of two constraints is therefore not practical, so we resort to the above mentioned simplifications.

Under a certain labeling of the vertices, we consider the flag:
\begin{gather*}
\mc F = \{F_1 = \{\mathbf{1, 2, 3, 4, 5}, 6, 7\}, F_2 = \{\mathbf{8, 9, 10, 11, 12}, 13, 14\}, F_3 = \{1, 2, 6, 8, 14\}, \\
F_4 = \{1, 5, 8, 9, 14\}, F_5 = \{6, 8, 9, 12, 11\}, F_6 = \{1, 6, 8, 14, 9\}\},
\end{gather*}
where we ordered the vertices in each facet so that the facet is positively oriented and for the non-simplicial facets we wrote a basis in bold. We consider the corresponding reduced slack matrix with the following dehomogenization:
\begin{small}
\[
S_{\mc F}(\xx) = \begin{bmatrix}
0 & x_{1,2} & 0 & 0 & 1 & 0 \\
0 & x_{2,2} & 0 & x_{2,4} & 1 & x_{2,6} \\
0 & 1 & 1 & 1 & 1 & 1 \\
0 & x_{4,2} & x_{4,3} & x_{4,4} & 1 & x_{4,6} \\
0 & x_{5,2} & x_{5,3} & 0 & 1 & x_{5,6} \\
0 & 1 & 0 & x_{6,4} & 0 & 0 \\
0 & x_{7,2} & x_{7,3} & x_{7,4} & 1 & x_{7,6} \\
1 & 0 & 0 & 0 & 0 & 0 \\
x_{9,1} & 0 & 1 & 0 & 0 & 0 \\
1 & 0 & x_{10,3} & x_{10,4} & 1 & x_{10,6} \\
x_{11,1} & 0 & x_{11,3} & x_{11,4} & 0 & 1 \\
x_{12,1} & 0 & x_{12,3} & x_{12,4} & 0 & 1 \\
x_{13,1} & 0 & x_{13,3} & x_{13,4} & 1 & x_{13,6} \\
x_{14,1} & 0 & 0 & 0 & 1 & 0
\end{bmatrix}.
\]
\end{small}

Applying the first proposed constraint selection heuristic,  in the reconstructed slack matrix we only consider slack entries $S_{i,j}$ that avoid the vertices forming the triangle $\{2,4,7\}$, i.e., such that $i \notin \{2,4,7\}$ and $F_j$ does not contain any of $2,4$, or $7$. This leaves us with only $53$ constraints in $31$ variables. Searching for $(2,3)$-positive polynomial certificates we find the certificate of non-realizability:
\[
S_{5,2} S_{11,9} + S_{5,5} S_{13,8} + S_{5,2} S_{8,7} = 0.
\]
where the facets outside the flag used are
\begin{gather*}
F_7 = \{3, 6, 9, 10, 11\}, F_8 = \{5, 8, 9, 11, 10\} \text{ and } F_9 = \{3, 6, 8, 9, 13\}.
\end{gather*}
When rehomogenizing, we need to introduce extra variables to maintain the equality, thus obtaining the homogeneous certificate:
\[
S_{5,2}^H S_{11,9}^H {\bc x_{5,5} x_{10,5}} + S_{5,5}^H S_{13,8}^H {\bc x_{3,5} x_{10,5}} + S_{5,2}^H S_{8,7}^H {\bc x_{5,5} x_{13,5}} = 0.
\]

We found the same certificate using the second constraint selection method, selecting only pairs $(i, F_j)$ where $\{8,9,11\} \subseteq \{i\} \cup F_j$. We thus consider $43$ constraints (plus $31$ variables) in the reconstructed slack matrix.

Finally, using monomial simplification, we find a simpler certificate choosing $(k,l) = (1,2)$. In this case we consider $50$  constraints (plus $31$ variables) in the reconstructed slack matrix. The certificate is:
\begin{gather*}
S_{8,7} + S_{11,9} + S_{6,10} = 0,\\
S_{8,7}^H {\bc x_{13,5}} + S_{11,9}^H {\bc x_{10,5}} + S_{6,10}^H {\bc x_{3,5}}= 0,
\end{gather*}
where the additional column used corresponds to the set $F_{10} = \{8, 9, 10, 11, 13\}$, which comes from choosing a different facet basis for $\{7, 8, 9, 10, 11, 12, 13\}$.
\end{example}

\subsection{Numerical results}

In this section we provide performance results from our algorithm applied to a set of examples, obtained from several literature sources, of non-realizable polytopes or polytopes whose realizability is open. All computations in this paper were performed on a desktop computer with 8 cores, Intel i7-7700, running at 3.6GHz with 32GB RAM.

\medskip
\noindent\textbf{Database of simplicial spheres}

In Table \ref{tab.Database} we summarize the results about some $4$-dimensional simplicial spheres. For some of them we could not find a non-realizability certificate.

\begingroup
\renewcommand\arraystretch{1.1}
\begin{longtable}{| c | c | c | c | >{\centering\arraybackslash}m{2cm} | >{\centering\arraybackslash}m{2cm} |}
  \hline
  \centering \textbf{Name} & \textbf{\# vertices} & \textbf{\# facets} & $(k,l)$ & \textbf{\# terms in} \newline \textbf{certificate} & \textbf{Previous} \newline \textbf{certificate?} \\\hline
  Altshuler $N^{10}_{3574}$ \cite{A77} & $10$ & $35$ & $(2,2)$ & $5$ & yes \cite{JR07} \\ \hline

  Doolittle 1 \cite{D21} & $11$ & $44$ & $(2,2)$ & $6$ & no \\ \hline

  Doolittle 2 \cite{D21} & $13$ & $65$ & $(2,2)$ & $10$ & no \\ \hline

  Doolittle 3 \cite{D21} & $13$ & $65$ & $(2,3)$ & $15$ & no \\ \hline

  Novik-Zheng $\Delta^3_6$ \cite{NZ20} & $12$ & $48$ & $(2,4)$ & $10$ & yes \cite{pfeifle2020positive} \\ \hline

  Novik-Zheng $\Delta^3_n$ ($n \geq 7$) \cite{NZ20} & $2n$ & $2n(n-2)$ & $(2,4)$ & $10$ & yes \cite{pfeifle2020positive} \\ \hline \hline

  Firsching F374225 \cite{F20} & $12$ & $54$ & \multicolumn{2}{c |}{not found} & no \\ \hline

  Firsching T2775 \cite{F20} & $14$ & $49$ & \multicolumn{2}{c |}{not found} & no \\ \hline

  Zheng \cite{Z20} & $16$ & $80$ & \multicolumn{2}{c |}{not found} & yes \cite{pfeifle2020positive} \\ \hline

\caption{Database of simplicial spheres} \label{tab.Database}
\end{longtable}
\endgroup

We have seen a non-realizability certificate for Altshuler's sphere $N^{10}_{3574}$ in Example \ref{E.N10_3574}. Modifying $N^{10}_{3574}$, Joseph Doolittle recently constructed three $4$-dimensional simplicial spheres whose realizability was not known \cite{D21}. We proved that they are all non-realizable and presented an explicit certificate for one of them in Example \ref{ex:Doolittle}. The certificates for the other two spheres can be found at \cite{dataGMW21}.

\medskip
Then we consider Jockusch’s family of simplicial $3$-spheres, $\Delta^3_n$, for $n \geq 6$, whose construction is described in \cite{NZ20}. Using our algorithm, we recover \cite[Theorem 5.11]{pfeifle2020positive}.

\begin{theorem}
For $n \geq 6$, $\Delta^3_n$ is not polytopal.
\end{theorem}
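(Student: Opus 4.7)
\medskip

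\noindent\emph{Proof plan.} By Proposition \ref{prop:Grassmannian}, to show that $\Delta^3_n$ is not polytopal it suffices to produce a positive polynomial in the ideal of its (parametrized) slack variety. The plan is to apply the $(k,l)$-positive polynomial search \eqref{eq:primalparametrizedv2} with $(k,l)=(2,4)$ to obtain a certificate for the base case $n=6$, and then argue that a single structural certificate covers the whole family.

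For the base case $n=6$, I would fix a flag $\mc F$ of facets of $\Delta^3_6$, follow the procedure of Section \ref{sec:slack_setting} to build the dehomogenized reduced slack matrix together with its oriented reconstruction, assemble the constraint set $\mc G_{2,4}$ consisting of products of at most two entries of degree at most four, and solve \eqref{eq:primalparametrizedv2} with Gurobi. According to Table \ref{tab.Database}, this search returns a ten-term $(2,4)$-positive polynomial, which serves as a certificate of non-realizability and can be verified symbolically on the reconstructed slack matrix.

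To extend to $n \geq 7$, I would inspect which slack entries actually appear in the base certificate: they are indexed by a fixed subset $V_0$ of vertices and a fixed set $\mc F_0$ of facets of $\Delta^3_6$. Jockusch's construction of $\Delta^3_n$ (see \cite{NZ20}) is recursive, adding new vertices in a controlled way, so that (after relabeling) $\Delta^3_6$ sits inside each $\Delta^3_n$ as a combinatorial subcomplex whose facets in $\mc F_0$ are neither subdivided nor deleted as $n$ grows. Choosing a flag for $\Delta^3_n$ that extends a flag of the embedded $\Delta^3_6$ inside $V_0$, the ten determinantal expressions making up the base certificate are computed by the same formulas in the reconstructed slack matrix of $\Delta^3_n$, so the same linear combination remains a positive polynomial in the slack ideal of $\Delta^3_n$. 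The constant term count of ten in Table \ref{tab.Database} across all $n \geq 7$ is precisely the signature of such a uniform lift.

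The main obstacle is the combinatorial verification at the uniform step: one has to confirm that the pair $(V_0,\mc F_0)$ genuinely embeds into each $\Delta^3_n$ with unchanged incidences, and that the facet orientations chosen to make the slack entries positive in the base case extend consistently for all $n \geq 7$. Once these compatibility checks are established---which, given the iterative nature of Jockusch's construction, reduces to a finite bookkeeping exercise comparing the link of the inserted vertex to the star of the corresponding face in $\Delta^3_6$---the ten-term certificate from $n=6$ lifts verbatim to every $n \geq 7$, recovering \cite[Theorem 5.11]{pfeifle2020positive}.
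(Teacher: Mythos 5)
Your base case is exactly what the paper does: for $n=6$ you fix a flag, build the dehomogenized reduced slack matrix, and run the $(2,4)$-positive polynomial search with Gurobi to obtain the ten-term certificate reported in Table~\ref{tab.Database}, verifiable symbolically. That part is fine.

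The gap is in the lift to $n \geq 7$, and it stems from a misreading of Jockusch's construction. You assert that, after relabeling, $\Delta^3_6$ ``sits inside each $\Delta^3_n$ as a combinatorial subcomplex whose facets in $\mc F_0$ are neither subdivided nor deleted.'' That is not true of the family as a whole: the passage from $\Delta^3_n$ to $\Delta^3_{n+1}$ is a surgery that \emph{removes} a $3$-ball and replaces it with a more finely triangulated one, so $\Delta^3_6$ is not a subcomplex of $\Delta^3_n$ for $n > 6$ and your proposed ``finite bookkeeping exercise comparing links and stars'' would fail if one tried to verify a literal embedding. What is actually preserved under the recursion is only the portion of $\Delta^3_6$ outside a specific $3$-ball, denoted $\pm B^{3,1}_6$ in \cite{NZ20,pfeifle2020positive}. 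The paper's proof makes precisely this observation: it checks that every facet appearing in the ten-term certificate avoids $\pm B^{3,1}_6$, and then invokes Pfeifle's argument \cite[Proof of Thm.~5.11]{pfeifle2020positive}, which shows that a final polynomial supported away from that ball remains a valid certificate for every $\Delta^3_n$, $n \geq 6$, because the Pl\"ucker coordinates it uses are still facet extensions with the same orientation in all of them. So the missing idea is the identification of the \emph{correct} invariant region (the complement of $\pm B^{3,1}_6$, not all of $\Delta^3_6$), together with the verification that the computed certificate stays inside it; once that is supplied, the ``uniform lift'' you describe does go through.
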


\begin{proof}
Let $P$ be the simplicial sphere $\Delta^3_6$ from \cite{NZ20}. It is $4$-dimensional, has $12$ vertices and $48$ facets. Under a certain labeling of the vertices, we consider the flag:
\begin{gather*}
\mc F = \{F_1 = \{2, 8, 12, 10\}, F_2 = \{2, 3, 7, 9\}, F_3 = \{2, 4, 12, 8\}, F_4 = \{2, 3, 8, 7\}, F_5 = \{1, 6, 9, 8\} \},
\end{gather*}
where we ordered the vertices in each facet so that the facet is positively oriented. We consider the corresponding reduced slack matrix
with the following dehomogenization:
\begin{small}
\[
S_{\mc F}(\xx) = \begin{bmatrix}
x_{1,1} & 1 & x_{1,3} & x_{1,4} & 0 \\
0 & 0 & 0 & 0 & 1 \\
x_{3,1} & 0 & x_{3,3} & 0 & 1 \\
1 & 1 & 0 & 1 & 1 \\
x_{5,1} & x_{5,2} & x_{5,3} & x_{5,4} & 1 \\
x_{6,1} & 1 & x_{6,3} & x_{6,4} & 0 \\
x_{7,1} & 0 & x_{7,3} & 0 & 1 \\
0 & 1 & 0 & 0 & 0 \\
1 & 0 & x_{9,3} & x_{9,4} & 0 \\
0 & x_{10,2} & x_{10,3} & x_{10,4} & 1 \\
x_{11,1} & x_{11,2} & 1 & x_{11,4} & 1 \\
0 & x_{12,2} & 0 & x_{12,4} & 1 \\
\end{bmatrix}.
\]
\end{small}

Searching for $(2,4)$-positive polynomial certificates, we find the certificate of non-realizability:
\begin{gather*}
S_{(2,7)} S_{(8,6)} + S_{(2,7)} S_{(8,11)} + S_{(5,3)} S_{(7,9)} + S_{(2,8)} S_{(8,9)} + S_{(2,7)} S_{(6,3)} +\\
S_{(2,8)} S_{(8,10)} + S_{(2,8)} S_{(8,6)} + S_{(2,7)} S_{(8,10)} + S_{(9,4)} S_{(2,11)} + S_{(2,7)} S_{(8,9)} = 0,
\end{gather*}
where the facets outside the flag used are
\begin{gather*}
F_6 = \{2, 3, 6, 4\}, F_7 = \{3, 4, 7, 5\}, F_8 = \{3, 5, 7, 12\}, F_9 = \{2, 3, 5, 6\},\\
F_{10} = \{3, 4, 5, 6\} \text{ and } F_{11} = \{4, 5, 6, 12\}.
\end{gather*}

The above certificate only uses the facets of $\Delta^3_6$ that avoid a certain $3$-ball, $\pm B^{3,1}_6$, contained in $\Delta^3_6$, see \cite[Proof of Thm. 5.11]{pfeifle2020positive}. By the same proof, this guarantees the non-realizability of all the $3$-spheres $\Delta^3_n$ in Jockusch's family for $n \geq 6$.
\end{proof}

Notice that the above certificate has $10$ terms, whereas Pfeifle's certificate has $11$ terms.

\begin{remark}
We ran our algorithm on each of the last three spheres in Table \ref{tab.Database} for several choices of flag and $(k,l) = (2,4)$ or $(3,2)$ considering all constraints, and with $(k,l) = (3,3)$ after removing an edge of the sphere. We did not find a non-realizability certificate in any of these attempts. This does not mean that these spheres are realizable. In fact, in \cite[Section 5.1]{pfeifle2020positive} Pfeifle found a non-realizability certificate for the last sphere in the table.
\end{remark}

\smallskip
\noindent\textbf{Prismatoids}

In \cite{CS19} Criado and Santos constructed $4093$ abstract $5$-dimensional non-$d$-step prismatoids with number of vertices between $14$ and $28$. These are examples of non-Hirsch spheres but it is not known if any of them is realizable as a convex polytope. We applied our method to several of these prismatoids with many different vertex numbers and, in all cases, we could find a $(2,3)$-positive polynomial, proving that they are non-polytopal, see \cite{dataGMW21}. For instance, in the cases with the lowest number of vertices, $14$ and $15$, we could exhaustively rule out all proposed spheres. This leads us to suspect that in fact none of the $4093$ is polytopal.

\begin{proposition}
The $40$ combinatorial prismatoids with $14$ and $15$ vertices from \cite{CS19} are not realizable as convex polytopes.
\end{proposition}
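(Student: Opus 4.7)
The plan is to verify the statement computationally by applying the algorithm of Section \ref{sec:results} to each of the $40$ combinatorial prismatoids in the $14$- and $15$-vertex instances of the database \cite{CS19}. By Proposition \ref{prop:Grassmannian} and the subsequent discussion on positive polynomials, finding a $(k,l)$-positive polynomial in the parametrized entries of the slack matrix is sufficient to certify non-realizability. Since there are only $40$ cases, the proof reduces to producing such a certificate for each one and verifying that the resulting sum of positive monomials indeed vanishes symbolically on the parametrized slack variety.

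Concretely, for each prismatoid $P$, I would first run Step~1 of the algorithm: choose a facet basis for each of the two non-simplicial facets (the top and bottom of the prismatoid), pick a flag $\mathcal{F}$ of facets (using the two non-simplicial facets at the start of the flag to keep the reduced slack matrix as sparse as possible), form the dehomogenized reduced slack matrix $S_{\mathcal{F}}(\xx)$, and reconstruct the full parametrized slack matrix together with a consistent orientation via the sign-propagation procedure. Then I would run Step~2 with $(k,l) = (2,3)$, building the constraint set $\mathcal{G}_{2,3}$ of products of at most two slack entries of degree at most three, followed by Step~3, solving the linear program \eqref{eq:primalparametrizedv2} with Gurobi.

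The main obstacle is computational: for prismatoids with $14$ or $15$ vertices and dozens of facets, the full constraint set $\mathcal{G}_{2,3}$ can easily be too large for a direct LP solve, as illustrated by Example \ref{E.P3513}. To overcome this I would deploy the heuristics of Section~\ref{S.constraintSelection}. Following the pattern observed in that example, each prismatoid typically admits a small face $G$ (often an edge or triangle in the side facets) that can be used either for vertex avoidance, discarding all entries $(i,j)$ with $i \in G$ or $F_j \cap G \neq \emptyset$, or for vertex fixing, retaining only entries with $G \subseteq \{i\} \cup F_j$. Combined with monomial simplification, which extracts common variables from the reconstructed entries and thereby produces lower-degree constraints, these heuristics bring the LP within reach in each case.

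Finally, once the LP returns optimal value zero for a given prismatoid, I would read off the non-zero primal variables to obtain an explicit $(2,3)$-positive polynomial, and independently verify, by symbolic expansion in the variables of the reduced slack matrix, that the corresponding linear combination of parametrized entries is identically zero. This symbolic check is cheap and bypasses any concern about the inexactness of Gurobi's arithmetic. The resulting certificates for all $40$ prismatoids would be recorded in the supplementary data \cite{dataGMW21}, completing the proof.
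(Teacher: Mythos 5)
Your proposal matches the paper's approach: the proposition is established computationally by running the LP-based search for $(2,3)$-positive polynomial certificates (aided by the vertex-avoidance/vertex-fixing/monomial-simplification heuristics) on each of the $4$ prismatoids with $14$ vertices and the $36$ with $15$ vertices, verifying each certificate symbolically, and recording the results in \cite{dataGMW21} and Table~\ref{tab.Prismatoids15v}. The paper does not include a separate proof block but simply reports exactly this computation in the surrounding text and Example~\ref{E.P3513}.
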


\begin{remark}
The four prismatoids with $14$ vertices have $94$ facets and in \cite{CS19} are denoted by numbers $\#1039, \#1963, \#2669$ and $\#3513$. In \cite[Section 5.2]{pfeifle2020positive}, Pfeifle found non-realizability certificates for these prismatoids, each with $5$ terms of degree $4$. For the same prismatoids we found $(2,3)$-positive polynomials, which give shorter certificates with $3$ terms of degree $4$ after rehomogenization, as seen in Example \ref{E.P3513}. The complete data about our computations can be found at \cite{dataGMW21}.
\end{remark}

For each of the $36$ prismatoids with $15$ vertices and $103$, $105$ or $107$ facets we found a\break $(2,3)$-positive polynomial certificate of non-realizability. We summarize the results in Table \ref{tab.Prismatoids15v}, where for each prismatoid we list the number of terms in the non-realizability certificate we found. Again, more details can be found at \cite{dataGMW21}.
\vspace{-4mm}
\begin{center}
\begin{table}[ht!]
\renewcommand\thetable{2}
\begin{minipage}{0.25\textwidth}
\begin{longtable}{| c | c |}
  \hline
  \centering \textbf{Name} & \textbf{\# terms} \\\hline
  0213 & 4 \\ \hline

  0247 & 8 \\ \hline

  0289 & 3 \\ \hline

  0375 & 9 \\ \hline

  0554 & 9 \\ \hline

  0572 & 3 \\ \hline

  0595 & 5 \\ \hline

  0743 & 3 \\ \hline

  0800 & 3 \\ \hline
  \end{longtable}
  \end{minipage}\begin{minipage}{0.25\textwidth}
\begin{longtable}{| c | c | c |}
  \hline
  \centering \textbf{Name} & \textbf{\# terms} \\\hline
  0821 & 3 \\ \hline

  1293 & 7 \\ \hline

  1377 & 3 \\ \hline

  1649 & 7 \\ \hline

  1682 & 3 \\ \hline

  1782 & 9 \\ \hline

  1993 & 3 \\ \hline

  2063 & 8 \\ \hline

  2146 & 5 \\ \hline
  \end{longtable}
  \end{minipage}\begin{minipage}{0.25\textwidth}
\begin{longtable}{| c | c | c |}
  \hline
  \centering \textbf{Name} & \textbf{\# terms} \\\hline
  2173 & 6 \\ \hline

  2253 & 6 \\ \hline

  2348 & 6 \\ \hline

  2363 & 5 \\ \hline

  2505 & 8 \\ \hline

  2703 & 3 \\ \hline

  2864 & 3 \\ \hline

  2870 & 6 \\ \hline

  2873 & 9 \\ \hline
  \end{longtable}
  \end{minipage}\begin{minipage}{0.25\textwidth}
\begin{longtable}{| c | c | c |}
  \hline
  \centering \textbf{Name} & \textbf{\# terms} \\\hline
  2972 & 6 \\ \hline

  3022 & 3 \\ \hline

  3202 & 5 \\ \hline

  3353 & 4 \\ \hline

  3474 & 3 \\ \hline

  3672 & 3 \\ \hline

  3784 & 9 \\ \hline

  3800 & 4 \\ \hline

  4067 & 3 \\ \hline
\end{longtable}
\end{minipage}
\caption{Prismatoids with $15$ vertices \label{tab.Prismatoids15v}}
\end{table}
\end{center}

\begin{example}
Prismatoid $\#2105$ is the largest prismatoid constructed by Criado and Santos in~\cite{CS19}, having $28$ vertices and $273$ facets, two of which are non-simplicial. The corresponding slack matrix has $6261$ distinct non-constant entry values, $910$ of which have degree $\leq 3$, in $87$ variables. Computing all products of two constraints of degree $\leq 3$ is doable and running the algorithm takes $6$ minutes and $24$ seconds.

Under a certain labeling of the vertices, we consider the flag:
\begin{gather*}
\mc F = \{F_1 = \{\mathbf{1, 2, 3, 4, 6, 5}, 7, 8, 9, 10, 11, 12, 13, 14\}, \\
F_2 = \{\mathbf{15, 16, 17, 19, 18}, 20, 21, 22, 23, 24, 25, 26, 27, 28\},\\
F_3 = \{4, 8, 17, 26, 25\}, F_4 = \{3, 8, 10, 27, 25\}, F_5 = \{4, 8, 25, 26, 27\}, F_6 = \{4, 10, 25, 27, 26\}\},
\end{gather*}
where we ordered the vertices in each facet so that the facet is positively oriented and for the non-simplicial facets we write a basis in bold. We consider the corresponding reduced slack matrix $S_{\mc F}(\xx)$ with the following dehomogenization:
\[
\setcounter{MaxMatrixCols}{30}
\begin{bmatrix}
0 & 0 & 0 & 0 & 0 & 0 & 0 & 0 & 0 & 0 & 0 & 0 & 0 & 0 & \star & 1 & \star & \star & \star & \star & \star & \star & \star & \star & 1 & \star & \star & \star \\
1 & 1 & 1 & 1 & 1 & 1 & 1 & 1 & 1 & 1 & 1 & 1 & 1 & 1 & 0 & 0 & 0 & 0 & 0 & 0 & 0 & 0 & 0 & 0 & 0 & 0 & 0 & 0 \\
\star & \star & \star & 0 & \star & \star & 1 & 0 & \star & \star & \star & \star & \star & \star & \star & \star & 0 & \star & \star & \star & \star & \star & \star & \star & 0 & 0 & 1 & \star \\
\star & \star & 0 & \star & \star & \star & 1 & 0 & \star & 0 & \star & \star & \star & \star & \star & \star & \star & \star & \star & \star & \star & \star & \star & \star & 0 & 1 & 0 & \star \\
\star & \star & \star & 0 & \star & \star & 1 & 0 & \star & \star & \star & \star & \star & \star & \star & \star & \star & \star & \star & \star & \star & \star & \star & \star & 0 & 0 & 0 & \star \\
\star & \star & \star & 0 & \star & \star & \star & 1 & \star & 0 & \star & \star & \star & \star & 1 & 1 & 1 & 1 & 1 & 1 & 1 & 1 & 1 & 1 & 0 & 0 & 0 & 1
\end{bmatrix}.
\]
The above matrix is the transpose of the dehomogenized $S_{\mc F}(\xx)$, where we denote the variable entries with a $\star$.

We apply the three constraint selection heuristics described in Section \ref{S.constraintSelection}, considering in the reconstructed slack matrix only slack entries $S_{i,j}$ that
\begin{itemize}
\item avoid the vertices forming facet $\{1, 2, 5, 16, 21\}$, i.e., such that $i \notin \{1, 2, 5, 16, 21\}$ and $F_j$ does not contain any of $1, 2, 5, 16, 21$ and
\item such that $\{25, 27\} \subseteq \{i\} \cup F_j$.
\end{itemize}
This leaves us with only $219$ constraints in $87$ variables.

\indent Searching for $(2,3)$-positive polynomial certificates we find, after $22$ seconds, the certificate of non-realizability:
\begin{gather*}
S_{26, 4} S_{25, 7} + S_{3, 5} S_{25, 8} + S_{4, 4} S_{22, 5} + S_{4, 4} S_{22, 9} + S_{4, 4} S_{28, 6} = 0,
\end{gather*}
where the facets used from outside the flag are
\begin{gather*}
F_7 = \{3, 4, 8, 22, 27\}, F_8 = \{3, 8, 10, 22, 27\}, \text{and } F_9 = \{3, 10, 25, 26, 27\}.
\end{gather*}
\end{example}

\section{Concluding remarks}
We have presented a conceptually simple algorithm for producing certificates of non-realizability for abstract polytopal spheres. We first generate a novel parametrization of the realization space, and then solve a straightforward linear program which tries to find positive polynomials in the defining ideal of the realization space. We give explicit examples of certificates found via this method, both in cases that were already known where we are also able to find simpler certificates than those obtained by previous methods, and in cases where no previous certificates were known.

The certificates we produce can easily be interpreted as classical final polynomials. Unlike many other techniques used to provide such certificates, we do not need to make any assumptions on the structure of the desired final polynomial. However, close inspection of the structure of the certificates we obtain allows us to suggest further improvements to our algorithm via several heuristics that allow us to significantly decrease the size of our search space in the case of larger spheres.
While these preliminary results are very encouraging, some questions remain open. \smallskip

{\bf Are all prismatoids non-realizable?} As mentioned before, we could easily derive a\break $(2,3)$-positive polynomial for every prismatoid that we tried. It would be interesting to explore those certificates and find a general obstruction to realizability for polytopes constructed  in \cite{CS19}. \smallskip

{\bf Can the heuristics be improved?} The proposed heuristics can lighten the computational load of the method, but are not enough to deal with some of the larger cases of interest, like Firsching's spheres. It would be important to develop smarter ways of reducing the size of the problem, to make these examples more treatable.\smallskip

{\bf How important is the choice of flag?} Our algorithm starts with an arbitrary choice of flag. It is unclear, both in theoretical and in practical terms, how that can affect its performance. In particular, one could try to see if there exists some criterion that would allow choosing particularly suitable flags as a starting point.

\section*{Acknowledgements}
The authors acknowledge the extensive use of the software \textit{SageMath} \cite{Sagemath}, \textit{Macaulay2} \cite{M2} and \textit{Gurobi} \cite{Gurobi}. They also thank Julian Pfeifle for his availability to further explain his recent results and Michael Joswig for providing some useful references.  The second author thanks Marco Macchia for his suggestions on the implementation of the algorithm in \textit{SageMath}.

\bibliographystyle{plain}
\bibliography{mybibliography}
\end{document}